\documentclass[12pt]{article}
\usepackage[a4paper, margin=2.3cm]{geometry}
\usepackage{amsmath,amssymb,amsthm}
\usepackage{color}
\usepackage{parskip}
\usepackage{url}


\newtheorem{theorem}{Theorem}[section]

\newtheorem{definition}{Definition}[section]
\newtheorem{corollary}[theorem]{Corollary}
\newtheorem{lemma}[theorem]{Lemma}

\newtheorem*{definition*}{Definition}

\DeclareMathOperator{\rank}{\mathtt{rank}}

\newcommand\be{\begin{eqnarray*}}
\newcommand\ee{\end{eqnarray*}}
\newcommand\beq{\begin{equation}}
\newcommand\eeq{\end{equation}}

\newcommand\ben{\begin{eqnarray}}
\newcommand\een{\end{eqnarray}}

\begin{document}
\title{Expanding phenomena over matrix rings}

\author{ Y. Dem\.iro\u{g}lu Karabulut\thanks{Harvey Mudd College, California. Email: {\tt ysm.demiroglu@gmail.com}}
\and D. Koh\thanks{Chungbuk National University. Email: {\tt koh131@chungbuk.ac.kr} (Corresponding Author)}\and
    T. Pham\thanks{University of California, San Diego. Email: {\tt v9pham@ucsd.edu}}
  \and
  C-Y. Shen \thanks{National Taiwan University. Email: {\tt cyshen@math.ntu.edu.tw}}
\and 
    L. A. Vinh \thanks{Vietnam Institute of Educational Sciences.
    Email: {\tt vinhla@vnu.edu.vn
}}}

\date{}
\maketitle
\date{}
\maketitle
\begin{abstract}
In this paper, we study expanding phenomena in the setting of matrix rings. More precisely, we will prove that 
\begin{itemize}
\item If $A$ is a set of $M_2(\mathbb{F}_q)$ and $|A|\gg q^{7/2}$, then we have 
\[|A(A+A)|, ~|A+AA|\gg q^4.\]
\item If $A$ is a set of $SL_2(\mathbb{F}_q)$ and $|A|\gg q^{5/2}$, then we have 
\[|A(A+A)|, ~|A+AA|\gg q^4.\]
\end{itemize}
We also obtain similar results for the cases of $A(B+C)$ and $A+BC$, where $A, B, C$ are sets in $M_2(\mathbb{F}_q)$.
\end{abstract}
\section{Introduction}
Let $\mathbb{F}_q$ be a finite field of order $q$ where $q$ is an odd prime power. Given a function $f\colon \mathbb{F}_q^d\to \mathbb{F}_q$, define 
\[f(A, \ldots, A)=\{f(a_1, \ldots, a_d)\colon a_1, \ldots, a_d\in A\},\]
the image of the set $A^d\subset \mathbb{F}_q^d$ under the function $f$. We start with the following definition of expander polynomials which can be found in \cite{hart11}. 
\bigskip
\begin{definition}
Let $f$ be a function from $\mathbb{F}_q^d$ to $\mathbb{F}_q$. 
\begin{itemize}
\item[1.] The function $f$ is called a strong expander with the exponent $\varepsilon>0$ if for all $A\subset \mathbb{F}_q$ with $|A|\gg q^{1-\varepsilon}$, one has $|f(A, \ldots, A)|\ge q-k$ for some fixed constant $k$. 
\item[2.] The function $f$ is called a moderate expander with the exponent $\varepsilon>0$ if for all $A\subset \mathbb{F}_q$ with $|A|\gg q^{1-\varepsilon}$, one has $|f(A, \ldots, A)|\gg q$.
\end{itemize}
\end{definition}
Here and throughout, $X \ll Y$ means that there exists  some absolute constant $C_1>0$ such that $X \leq C_1Y$, $X \gtrsim Y$ means $X\gg (\log Y)^{-C_2} Y$ for some absolute constant $C_2>0$, and  $X\sim Y$ means $Y\ll X\ll Y$.

Over last decades, an intensive study on expander polynomials has been made by a number of authors. Using the Kloosterman sum, Hart, Iosevich and Solymosi \cite{ha} proved that the polynomial $f(x, y, z, t)=(x-y)(z-t)$ is a strong expander with $\varepsilon=1/4$ and $k=0$. The precise statement is as follows. 
\bigskip

\begin{theorem}[\textbf{Hart-Iosevich-Solymosi}, \cite{ha}]
Let $\mathbb{F}_q$ be a finite field of order $q$ and $A$ be a set in $\mathbb{F}_q$. Suppose that $|A|\gg q^{3/4}$, then 
\[(A-A)(A-A)=\mathbb{F}_q.\]
\end{theorem}
In the case, we only want to get a positive proportion of all elements in $\mathbb{F}_q$,  Bennett, Hart, Iosevich, Pakianathan, and Rudnev \cite{bennett} proved that the threshold $q^{3/4}$ can be reduced to $q^{2/3}$ by employing Fourier techniques and tools from group action theory. This result tells us that $f(x, y, z, t)=(x-y)(z-t)$ is a moderate expander with $\varepsilon=1/3$.  

There are several families of moderate expanders with the exponent $\varepsilon=1/3$ that have been discovered over recent years. For example, $xy+zt$ by Hart and Iosevich \cite{ha2}, $x+yz$ by Shparlinski in \cite{shpas}, $x(y+z)$ and $x+(y-z)^2$ by the fifth listed author \cite{vinh}. Using methods from spectral graph theory, the fifth listed author \cite{vinh} broke the exponent $1/3$ by showing that $(x-y)^2+zt$ is a moderate expander with $\varepsilon=3/8$. 

A very general result for polynomials in two variables was given by Tao \cite{tao}. In particular, Tao \cite{tao} proved that for any polynomial $f(x, y)\in \mathbb{F}_q[x, y]$ that is not one of the forms $Q(F_1(x)+F_2(y))$ and $Q(F_1(x)F_2(y))$ for some polynomials $Q, F_1, F_2\colon \mathbb{F}_q\to \mathbb{F}_q$, we have 
\[|f(A, A)|\gg q,\]
under the assumption $|A|\gg q^{1-\frac{1}{16}}$. Therefore, such polynomials $f(x, y)$ are moderate expanders with $\varepsilon=1/16$. 

In the setting of prime fields, the third, fifth listed authors and De Zeeuw \cite{pham} showed that any quadratic polynomial in three variables $f\in \mathbb{F}_p[x,y,z]$ that depends on each variable and that does not have the form $g(h(x)+k(y)+l(z))$ for some polynomials $g, h, k, l\colon \mathbb{F}_p\to \mathbb{F}_p$ is a moderate expander with $\varepsilon=1/3$. Rudnev, Shkredov, and Stevens \cite{RSS} also proved that the function $(xy-z)(x-t)^{-1}$ is a moderate expander with $\varepsilon=17/42$ over prime fields.

Let $M_2(\mathbb{F}_q)$ be the set of two by two matrices with entries in $\mathbb{F}_q$,  $SL_2(\mathbb{F}_q)$ be the set of matrices in $M_2(\mathbb{F}_q)$ with determinant one, and $GL_2(\mathbb{F}_q)$ be the set of invertible matrices in $M_2(\mathbb{F}_q)$. Let $f\colon M_2(\mathbb{F}_q)^d\to M_2(\mathbb{F}_q)$ be a function in $d$ variables. For $A_1, \ldots, A_d\subset M_2(\mathbb{F}_q)$, we define
\[f(A_1, \ldots, A_d):=\left\lbrace f(a_1, \ldots, a_d)\colon a_i\in A_i, ~1\le i\le d\right\rbrace.\]
Similarly, in the setting of $M_2(\mathbb{F}_q)$, we have the following definition. 
\bigskip
\begin{definition}
Let $f$ be a function in $d$ variables from $M_2(\mathbb{F}_q)^d$ to $ M_2(\mathbb{F}_q).$
\begin{itemize}
\item[1.] The function $f$ is called a \textit{strong expander} over $M_2(\mathbb{F}_q)$ with the exponent $\varepsilon>0$ if for all $A\subset M_2(\mathbb{F}_q)$ with $|A|\gg q^{4-\varepsilon}$, one has $f(A, \ldots, A)\supset GL_2(\mathbb{F}_q)$.
\item[2.] The function $f$ is called a \textit{moderate expander} over $M_2(\mathbb{F}_q)$ with the exponent $\varepsilon>0$ if for all $A\subset M_2(\mathbb{F}_q)$ with $|A|\gg q^{4-\varepsilon}$, one has $|f(A, \ldots, A)|\gg q^4$. 
\item[3.] The function $f$ is called a \textit{strong expander} over $SL_2(\mathbb{F}_q)$ with the exponent $\varepsilon>0$ if for all $A\subset SL_2(\mathbb{F}_q)$ with $|A|\gg q^{3-\varepsilon}$, one has $f(A, \ldots, A)\supset GL_2(\mathbb{F}_q)$.
\item[4.] The function $f$ is called a \textit{moderate expander} over $SL_2(\mathbb{F}_q)$ with the exponent $\varepsilon>0$ if for all $A\subset SL_2(\mathbb{F}_q)$ with $|A|\gg q^{3-\varepsilon}$, one has $|f(A, \ldots, A)|\gg q^4$. 
\end{itemize}
\end{definition}
The first strong expander polynomial over $M_2(\mathbb{F}_q)$ was given by Ferguson, Hoffman, Luca, Ostafe, and Shparlinski \cite{shpa} by using an analogue of the Kloosterman over matrix rings and the approach in \cite{ha}. More precisely, they proved that for $A\subset M_2(\mathbb{F}_q)$ with $|A|\gg q^{4-\frac{1}{4}}$, we have 
\[(A-A)(A-A)\supset GL_2(\mathbb{F}_q).\]
This result implies that $(x-y)(z-t)$ is a strong expander over $M_2(\mathbb{F}_q)$ with $\varepsilon=1/4$. We note that similar results in the setting of Heisenberg group over prime fields for small sets were obtained recently by Hegyv\'{a}ri and Hennecart in \cite{HH}. Some generalizations can be found in \cite{koh2, koh3}. We refer the interested reader to \cite{chang4, soly4} and references therein for related results in the setting of $\mathbb{R}$ or $\mathbb{Z}$.

The main purpose of this paper is to provide some more families of moderate expanders over $M_2(\mathbb{F}_q)$ and $SL_2(\mathbb{F}_q)$ with exponents $7/2$ and $5/2$, respectively. The following is our first theorem. 
\bigskip
\begin{theorem}\label{nice}
Let $f(x, y, z)=x+yz$ be a function from $M_2(\mathbb{F}_q)\times SL_2(\mathbb{F}_q)^2$ to  $M_2(\mathbb{F}_q)$.
For $A\subset M_2(\mathbb{F}_q), B, C\subset SL_2(\mathbb{F}_q)$, we have 
\[|f(A, B, C)|\gg \min \left\lbrace q^4, q^3|A|, \frac{|A||B|^2|C|^2}{q^7}, \frac{|B||C|}{q}\right\rbrace.\]
\end{theorem}
As consequences, in our next two corollaries, we show that $f=x+yz$ is a moderate expander over $SL_2(\mathbb{F}_q)$ and $M_2(\mathbb{F}_q)$ with the exponents $5/2$ and $7/2$, respectively. 
\bigskip
\begin{corollary}\label{nice9}
Let $f(x, y, z)=x+yz$ be a function from $SL_2(\mathbb{F}_q)^3$ to  $M_2(\mathbb{F}_q)$. For $A\subset SL_2(\mathbb{F}_q)$ with $|A|\gg q^{5/2}$, we have 
\[|f(A, A, A)|\gg q^4.\]
\end{corollary}
\bigskip
\begin{corollary}\label{Adidaphat-09}
Let $f(x, y, z)=x+yz$ be a function from $M_2(\mathbb{F}_q)^3$ to  $M_2(\mathbb{F}_q)$. For $A\subset M_2(\mathbb{F}_q)$ with $|A|\gg q^{7/2}$, we have 
\[|f(A, A, A)|\gg q^4.\]
\end{corollary}
\bigskip
When $f(x, y, z)=x(y+z)$, we have the following result. 
\bigskip
\begin{theorem}\label{nice1}
Let $f(x, y, z)=x(y+z)$ be a function from $SL_2(\mathbb{F}_q)^2\times M_2(\mathbb{F}_q)$ to  $M_2(\mathbb{F}_q)$. For $A, B\subset SL_2(\mathbb{F}_q),  C\subset M_2(\mathbb{F}_q)$ with $|B|, |C|\gg q^2$, we have 
\[|f(A, B, C)|\gg\min\left\lbrace q^4, \frac{|A||B|^2|C|}{q^5}, \frac{|A||B|}{q}\right\rbrace.\]
\end{theorem}
\bigskip
As consequences, in our next two corollaries, we show that $f=x(y+z)$ is a moderate expander over $SL_2(\mathbb{F}_q)$ and $M_2(\mathbb{F}_q)$ with the exponents $5/2$ and $7/2$, respectively.
\bigskip
\begin{corollary}\label{nice19}
Let $f(x, y, z)=x(y+z)$ be a function from $SL_2(\mathbb{F}_q)^3$ to  $M_2(\mathbb{F}_q)$. For $A\subset SL_2(\mathbb{F}_q)$ with $|A|\gg q^{5/2}$, we have 
\[|f(A, A, A)|\gg q^4.\]
\end{corollary}
\bigskip
\begin{corollary}\label{Adidaphat-10}
Let $f(x, y, z)=x(y+z)$ be a function from $M_2(\mathbb{F}_q)^3$ to  $M_2(\mathbb{F}_q)$. For $A\subset M_2(\mathbb{F}_q)$ with $|A|\gg q^{7/2}$, we have 
\[|f(A, A, A)|\gg q^4.\]
\end{corollary}
\bigskip
In the following two theorems, we extend Theorems \ref{nice} and \ref{nice1} for arbitrary sets in $M_2(\mathbb{F}_q)$ instead of the special linear group $SL_2(\mathbb{F}_q)$. The main idea in the proofs of Theorems \ref{thm-sum2''''} and \ref{thm-sum4''''} below is to make use of the pseudo-randomness property of the \textit{sum-product digraph} which is similar to the graph constructed by Solymosi \cite{ss}. If we apply Theorems \ref{thm-sum2''''} and \ref{thm-sum4''''} for sets in $SL_2(\mathbb{F}_q)$, then the conditions are worse than those of Corollaries \ref{Adidaphat-09} and \ref{Adidaphat-10}. We will discuss about the differences between these approaches in the last section. Our next result is as follows. 
\bigskip
\begin{theorem}\label{thm-sum2''''}
Let $f(x, y, z)=x(y+z)$ be a function from $M_2(\mathbb{F}_q)^3$ to  $M_2(\mathbb{F}_q)$.
For $A, B, C\subset M_2(\mathbb{F}_q)$ with $|A||B||C|\gg q^{11}$, we have 
\[|f(A, B, C)|\gg q^4.\]
\end{theorem}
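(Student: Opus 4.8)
The plan is to run the pseudorandom‑graph argument used for Theorems \ref{nice} and \ref{nice1}, but with the special unit Cayley graph replaced by the \emph{sum-product digraph}. First note that, as a set, $f(A,B,C)=A(B+C)$, so the goal becomes $|A(B+C)|\gg q^4$. I would begin by reducing to the case where a positive proportion of $A$ is invertible: if $|A\cap GL_2(\mathbb{F}_q)|<|A|/2$ then $|A|<2\,|\{\det=0\}|\asymp q^{3}$, which forces $|A||B||C|=O(q^{11})$ and hence contradicts the hypothesis once its implied constant is taken large enough. So write $A^{*}=A\cap GL_2(\mathbb{F}_q)$, with $|A^{*}|\gg|A|$. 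I would also record the elementary fact that $\#\{(y,z)\in B\times C:\ y+z\in GL_2(\mathbb{F}_q)\}\gg|B||C|$, which follows from a character-sum estimate over the determinant quadric $\{\det=0\}\subset\mathbb{F}_q^{4}$ as soon as $|B||C|\gg q^{3}$, and here $|B||C|\gg q^{7}$.

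Next I would introduce the bipartite digraph $\mathcal{G}$ with both vertex classes equal to $GL_2(\mathbb{F}_q)\times M_2(\mathbb{F}_q)$, where $(x,y)$ on the left is joined to $(w,z)$ on the right exactly when $w=x(y+z)$. A direct check shows $\mathcal{G}$ is biregular of degree $|GL_2(\mathbb{F}_q)|\asymp q^4$ on $|GL_2(\mathbb{F}_q)|\,q^4\asymp q^8$ vertices per side: for fixed invertible $x$ the value $w=x(y+z)$ is determined by $z$ and lies in $GL_2(\mathbb{F}_q)$ exactly when $y+z$ does, which happens for $|GL_2(\mathbb{F}_q)|$ values of $z$, and for fixed invertible $w$ the value $y=x^{-1}w-z$ is determined by $x$. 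The heart of the proof is the pseudorandomness of $\mathcal{G}$: diagonalising its adjacency operator by the additive characters $\psi_M$ of $(M_2(\mathbb{F}_q),+)\cong\mathbb{F}_q^{4}$ in the $y$-variable reduces the second largest singular eigenvalue to a uniform estimate for twisted exponential sums $\sum_{u\in M_2(\mathbb{F}_q)}\psi(\mathrm{tr}(Pu))$ over the fibres of the multiplication maps $u\mapsto Pu$; these are precisely the matrix Kloosterman sums bounded in \cite{Yesim}, and they should give that the second eigenvalue of $\mathcal{G}$ is $O(q^{7/2})$ — the exponent responsible for the $q^{11}$ threshold.

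Granting this, I would apply the expander mixing lemma with $S=A^{*}\times B$ on the left and $T=\big(f(A,B,C)\cap GL_2(\mathbb{F}_q)\big)\times C$ on the right. On one side, every triple $(x,y,z)\in A^{*}\times B\times C$ with $y+z\in GL_2(\mathbb{F}_q)$ produces the edge from $(x,y)$ to $\big(x(y+z),z\big)\in T$, and distinct triples produce distinct edges, so $e(S,T)=|A^{*}|\cdot\#\{(y,z)\in B\times C:\ y+z\in GL_2(\mathbb{F}_q)\}\gg|A^{*}|\,|B|\,|C|$. On the other side, mixing gives $e(S,T)\le q^{-4}|S|\,|T|+O\!\big(q^{7/2}\sqrt{|S|\,|T|}\big)$. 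Since $|T|\le|f(A,B,C)|\,|C|$, either the main term dominates and $|f(A,B,C)|\gg q^{4}$, or the error term dominates and $|f(A,B,C)|\gg|A^{*}|\,|B|\,|C|/q^{7}\gg|A||B||C|/q^{7}\gg q^{4}$; either way the theorem follows.

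The step I expect to be the real obstacle is the eigenvalue bound for $\mathcal{G}$: it rests on the Kloosterman-sum input of \cite{Yesim} and on a careful accounting of the rank-$\le 1$ matrices, for which the unrestricted relation $w=x(y+z)$ is far from regular; restricting to invertible $x$ and $w$ is precisely what makes $\mathcal{G}$ biregular and keeps that bookkeeping under control. The one genuinely degenerate configuration is $A\subseteq\{\det=0\}$, where $f(A,B,C)$ is contained in the $\asymp q^{3}$ singular matrices; it is ruled out — only barely — by the size hypothesis, which is why that hypothesis must be read with a sufficiently large implied constant.
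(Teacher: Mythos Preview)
Your overall plan --- build a sum-product digraph and apply expander mixing with the left set encoding $A\times B$ and the right set encoding $f(A,B,C)\times C$ --- is exactly the paper's. Two differences in execution are worth noting.

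First, the paper works on the full vertex set $M_2(\mathbb{F}_q)\times M_2(\mathbb{F}_q)$ with an edge from $(A,C)$ to $(B,D)$ whenever $AB=C+D$, rather than restricting either coordinate to $GL_2$. It then takes $U=\{(a^{-1},b):a\in A,\ b\in B\}$ (after discarding the singular matrices from $A$, exactly as you do) and $V=A(B+C)\times C$, with no invertibility constraint on the product $a(b+c)$. This sidesteps your auxiliary lemma that a positive proportion of pairs $(y,z)\in B\times C$ have $y+z\in GL_2$: the unrestricted graph is already $q^4$-regular on $q^8$ vertices, so the mixing lemma applies directly.

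Second --- and this is where your proposal has a real gap --- the eigenvalue bound $\lambda_2\ll q^{7/2}$ is \emph{not} obtained from the Kloosterman input of \cite{Yesim}; those sums control the Cayley graph $\Gamma(M_2(\mathbb{F}_q),SL_2(\mathbb{F}_q))$ and do not enter here at all. The paper instead shows the digraph is normal and analyses $M_1M_1^t$ combinatorially: for vertices $(A_1,C_1)\ne(A_2,C_2)$ it counts common out-neighbours by solving $(A_1-A_2)X=C_1-C_2$, splitting into cases on $\rank(A_1-A_2)$ and $\rank(C_1-C_2)$. This expresses $M_1M_1^t-J$ as a bounded linear combination of the identity and adjacency matrices of a handful of auxiliary regular graphs, each of degree $O(q^5)$, from which $\lambda_2\ll q^{7/2}$ follows. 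Your proposed reduction to ``$\sum_{u}\psi(\mathrm{tr}(Pu))$ over fibres of $u\mapsto Pu$'' does not produce Kloosterman-type sums --- such purely linear character sums over $M_2(\mathbb{F}_q)$ are either $0$ or $q^4$ --- so, as written, that sketch does not deliver the $q^{7/2}$ bound you need.
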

It is not hard to see that the exponent $q^{11}$ in Theorem \ref{thm-sum2''''} is sharp, since one can take $A$ as the set of zero-determinant matrices in $M_2(\mathbb{F}_q)$, $B=C=M_2(\mathbb{F}_q)$, and $|f(A, B, C)|=|A|=o(q^4)$. 
\bigskip
\begin{theorem}\label{thm-sum4''''}
Let $f(x, y, z)=x+yz$ be a function from $M_2(\mathbb{F}_q)^3$ to  $M_2(\mathbb{F}_q)$.
For $A, B, C\subset M_2(\mathbb{F}_q)$ with $|A||B||C|\gg q^{11}$, we have 
\[|f(A, B, C)|\gg q^4.\]
\end{theorem}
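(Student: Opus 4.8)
The plan is a second-moment (Cauchy--Schwarz) argument whose analytic heart is a bilinear character-sum estimate over $M_2(\mathbb{F}_q)$; this estimate is the analytic form of the spectral gap of the sum-product digraph alluded to above, and I phrase it through Fourier analysis on the additive group $M_2(\mathbb{F}_q)\cong\mathbb{F}_q^4$. For $M\in M_2(\mathbb{F}_q)$ put
\[
N(M):=\#\{(a,b,c)\in A\times B\times C:\ a+bc=M\},
\]
so that $\sum_M N(M)=|A||B||C|$ and $M\in f(A,B,C)$ if and only if $N(M)>0$. By Cauchy--Schwarz,
\[
|f(A,B,C)|=\#\{M:N(M)>0\}\ \ge\ \frac{\big(\sum_M N(M)\big)^2}{\sum_M N(M)^2}=\frac{(|A||B||C|)^2}{\sum_M N(M)^2},
\]
so it suffices to prove $\sum_M N(M)^2\ll (|A||B||C|)^2/q^4$. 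Observe that $\sum_M N(M)^2$ counts the sextuples $(a,a',b,c,b',c')$ in the respective sets with $bc-b'c'=a'-a$: this is exactly the count that a matrix analogue of Solymosi's sum-product digraph is built to control (pairs of edges sharing an ``$M$-fibre''), and the estimate below is the corresponding spectral bound.

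Fix a nontrivial additive character $\psi$ of $\mathbb{F}_q$ and use the nondegenerate trace pairing $\langle\xi,X\rangle=\mathrm{tr}(\xi X)$ on $M_2(\mathbb{F}_q)$, with $\widehat g(\xi)=\sum_X g(X)\psi(-\langle\xi,X\rangle)$. Writing $\mu(Y)=\#\{(b,c)\in B\times C:\ bc=Y\}$ we have $N=\mathbf{1}_A*\mu$ (convolution on $(M_2(\mathbb{F}_q),+)$), hence $\widehat N=\widehat{\mathbf{1}_A}\,\widehat\mu$ and, by Parseval,
\[
\sum_M N(M)^2=\frac{1}{q^4}\sum_{\xi}|\widehat{\mathbf{1}_A}(\xi)|^2\,|\widehat\mu(\xi)|^2=\frac{(|A||B||C|)^2}{q^4}+\frac{1}{q^4}\sum_{\xi\ne 0}|\widehat{\mathbf{1}_A}(\xi)|^2\,|\widehat\mu(\xi)|^2 .
\]
The first term is the desired main term. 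Using $\sum_\xi|\widehat{\mathbf{1}_A}(\xi)|^2=q^4|A|$, the remaining sum is at most $|A|\cdot\max_{\xi\ne 0}|\widehat\mu(\xi)|^2$, so everything comes down to the single bound
\[
\max_{\xi\ne 0}\,|\widehat\mu(\xi)|^2\ \le\ q^6\,|B||C| .
\]
Granting this, the error term is $\le q^6|A||B||C|$, which is dominated by $(|A||B||C|)^2/q^4$ whenever $|A||B||C|\gg q^{10}$ --- in particular under the hypothesis $|A||B||C|\gg q^{11}$ --- giving $\sum_M N(M)^2\ll(|A||B||C|)^2/q^4$ and hence $|f(A,B,C)|\gg q^4$.

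It remains to prove the displayed bound, equivalently $\big|\sum_{b\in B,\,c\in C}\psi(\mathrm{tr}(\xi bc))\big|\le q^3\sqrt{|B||C|}$ for every $\xi\ne 0$. First I would apply Cauchy--Schwarz in $b$ and enlarge the range of $b$ to all of $M_2(\mathbb{F}_q)$, obtaining
\[
|\widehat\mu(\xi)|^2\ \le\ |B|\sum_{c,c'\in C}\ \sum_{b\in M_2(\mathbb{F}_q)}\psi\big(\mathrm{tr}(\xi b(c-c'))\big) .
\]
By cyclicity the inner sum equals $\sum_{b}\psi\big(\mathrm{tr}(b\,(c-c')\xi)\big)$; since $X\mapsto\mathrm{tr}(XN)$ is a nontrivial character of $(M_2(\mathbb{F}_q),+)$ unless $N=0$, this is $q^4$ if $(c-c')\xi=0$ and $0$ otherwise. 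Finally, for $\xi\ne 0$ the right-multiplication map $X\mapsto X\xi$ on $M_2(\mathbb{F}_q)$ has kernel of dimension $2(2-\mathrm{rank}\,\xi)\le 2$, so each of its fibres contains at most $q^2$ matrices; therefore $\#\{(c,c')\in C^2:(c-c')\xi=0\}\le q^2|C|$, and $|\widehat\mu(\xi)|^2\le|B|\cdot q^4\cdot q^2|C|=q^6|B||C|$, as required.

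The one step that really needs care is this last one: one must check that the degenerate direction $\mathrm{rank}\,\xi=1$ (in the digraph picture, the irregular part of the sum-product digraph coming from singular matrices) does not spoil the bound, and the dimension count above shows that rank one is precisely the extremal case and is still comfortably within range. The rest --- the two Cauchy--Schwarz inequalities, the Parseval identity, and the bookkeeping around $N=\mathbf{1}_A*\mu$ --- is routine. Two remarks: (i) the Fourier form of the argument in fact only needs $|A||B||C|\gg q^{10}$, so the stated hypothesis $q^{11}$ is comfortably sufficient (routing the estimate through the expander mixing lemma for the digraph loses a little in the constants, which is presumably why $q^{11}$ is quoted, uniformly with Theorem~\ref{thm-sum2''''}); (ii) the companion Theorem~\ref{thm-sum2''''} fits the same scheme with $\mu$ replaced by the representation function of the additive sumset $B+C$, at the cost of an extra case analysis isolating the contribution of the non-invertible values of $x$.
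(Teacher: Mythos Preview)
Your proof is correct and takes a genuinely different route from the paper. The paper works on the sum--product digraph $G_1$ on $M_2(\mathbb{F}_q)\times M_2(\mathbb{F}_q)$, proves the spectral bound $\lambda(G_1)\ll q^{7/2}$ via a case analysis of $M_1M_1^t$, and then applies the expander mixing lemma to the vertex sets $U=\{(b,-a):a\in A,\,b\in B\}$ and $V=C\times(A+BC)$; that spectral bound is exactly what forces the threshold $|A||B||C|\gg q^{11}$. You instead run a direct $L^2$/Fourier argument on the additive group $M_2(\mathbb{F}_q)$: the convolution identity $N=\mathbf{1}_A*\mu$, Parseval, and the single estimate $\max_{\xi\ne 0}|\widehat\mu(\xi)|^2\le q^6|B||C|$, which you obtain by Cauchy--Schwarz and the elementary kernel count $\dim\ker(X\mapsto X\xi)=2(2-\mathrm{rank}\,\xi)\le 2$. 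This is both shorter and sharper: as you note, it already gives the conclusion under $|A||B||C|\gg q^{10}$. One small correction to your remark~(i): the $q^{11}$ in the paper is not a stylistic choice for uniformity with Theorem~\ref{thm-sum2''''} --- it is genuinely what their expander-mixing argument yields from $\lambda\ll q^{7/2}$, that bound being obtained by controlling the auxiliary matrices $E_{1i}$ only through their degrees; your Fourier estimate is in effect a tighter replacement for that step. The trade-off is that the paper's digraph is a reusable tool (it drives Theorems~\ref{thm-sum2''''}, \ref{thm-sum4''''}, \ref{thm-sum7} and \ref{thm-sum1} uniformly), whereas your argument is tailored to the specific shape $a+bc$; your remark~(ii) that $x(y+z)$ needs separate handling of singular $x$ is accurate and is indeed where the symmetry breaks.
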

\bigskip
If $f$ is a polynomial in four variables of the form $f(x, y, z, t)=xy+z+t$, the following two theorems show us that $f$ is a strong expander. 
\bigskip
\begin{theorem}\label{thm-sum7}
Let $f(x, y, z, t)=xy+z+t$ be a function from $M_2(\mathbb{F}_q)^4$ to  $M_2(\mathbb{F}_q)$. Suppose that $A\subset M_2(\mathbb{F}_q)$ and $|A|\gg q^{\frac{15}{4}}$. Then we have 
\[f(A, A, A, A)=M_2(\mathbb{F}_q).\]
\end{theorem}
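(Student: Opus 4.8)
The plan is to show the representation count
\[
  r(D)\;=\;\#\bigl\{(x,y,z,t)\in A^4 : xy+z+t=D\bigr\}
\]
is positive for \emph{every} $D\in M_2(\F_q)$, by proving $r(D)=(1+o(1))\,|A|^4q^{-4}$ uniformly in $D$ (note $|A|^4q^{-4}\gg q^{11}$ under the hypothesis). First, identify $(M_2(\F_q),+)$ with $\F_q^{4}$, fix a nontrivial additive character $\psi$ of $\F_q$, and set $\chi_N(M)=\psi(\mathrm{tr}(MN))$ for $N\in M_2(\F_q)$; as $N$ runs over $M_2(\F_q)$ these are exactly the additive characters, and $\sum_N\chi_N(M)$ equals $q^4$ if $M=0$ and $0$ otherwise. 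Expanding the indicator of $\{xy+z+t-D=0\}$ by this orthogonality relation and summing over $A^4$ gives
\[
  r(D)\;=\;\frac{1}{q^4}\sum_{N\in M_2(\F_q)}T(N)\,\widehat{1_A}(N)^2\,\overline{\chi_N(D)},\qquad T(N):=\sum_{x,y\in A}\psi(\mathrm{tr}(xyN)),\quad\widehat{1_A}(N):=\sum_{z\in A}\psi(\mathrm{tr}(zN)).
\]
The frequency $N=0$ contributes the main term $|A|^4q^{-4}$, so it remains to bound the contribution of the rest.

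By the triangle inequality, pulling $\max_{N\ne0}|T(N)|$ out of the sum, and Parseval ($\sum_N|\widehat{1_A}(N)|^2=q^4|A|$), the tail is at most $|A|\cdot\max_{N\ne0}|T(N)|$, so the whole problem reduces to the bilinear character sum $T(N)$ --- and this is the step I expect to be the main obstacle. Writing $h(M):=\sum_{y\in A}\psi(\mathrm{tr}(My))$ and using $\mathrm{tr}(xyN)=\mathrm{tr}((Nx)y)$, one gets $T(N)=\sum_{x\in A}h(Nx)$, so by Cauchy--Schwarz $|T(N)|^2\le |A|\sum_{x\in M_2(\F_q)}|h(Nx)|^2$. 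I would then split on the rank of $N$: if $N$ is invertible, $x\mapsto Nx$ permutes $M_2(\F_q)$, so the inner sum equals $\sum_M|h(M)|^2=q^4|A|$ and $|T(N)|\le q^2|A|$; if $\rank N=1$, then $x\mapsto Nx$ surjects onto the $q^2$ matrices whose columns lie in the line $\mathrm{Im}(N)$, with every fibre of size $q^2$, so the inner sum is at most $q^2\sum_M|h(M)|^2=q^6|A|$ and $|T(N)|\le q^3|A|$. Thus the rank-one frequencies are the bottleneck and $\max_{N\ne0}|T(N)|\ll q^3|A|$.

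Combining, the tail is $\ll q^3|A|^2$, which is dominated by the main term $|A|^4q^{-4}$ precisely when $|A|^2\gg q^{7}$; the stated hypothesis $|A|\gg q^{15/4}$ is comfortably in this range, so $r(D)>0$ for every $D$ and hence $f(A,A,A,A)=M_2(\F_q)$. The delicate point is exactly the rank-one case above: one must retain the restriction $x\in A$ until after Cauchy--Schwarz and only then enlarge the range to $M_2(\F_q)$, at which stage the fact that left multiplication by a rank-one matrix collapses $M_2(\F_q)$ onto a two-dimensional subspace with equal fibres does the work. Alternatively, $T(N)$ --- equivalently, the nontrivial eigenvalues of the associated Cayley graph --- can be controlled through the Kloosterman-sum bounds underlying the pseudo-randomness of the special unit Cayley graph $\Gamma(M_2(\F_q),SL_2(\F_q))$, which is closer to the strategy announced in the introduction.
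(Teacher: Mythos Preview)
Your argument is correct, and it takes a genuinely different route from the paper's proof. The paper works with the \emph{sum-product digraph} $G_1$ on $M_2(\F_q)\times M_2(\F_q)$, where $(A,C)\to(B,D)$ iff $AB=C+D$; it establishes (via a case analysis of $M_1M_1^t$) that $G_1$ is a $(q^8,q^4,c_1q^{7/2})$-digraph, applies the directed expander mixing lemma to the vertex sets $U=\{(a_1,-a_3+M)\}$ and $V=\{(a_2,a_4)\}$, and concludes $e(U,V)>0$ once $|A|\gg q^{15/4}$. Your approach bypasses the graph entirely: you expand $r(D)$ by additive characters of $(M_2(\F_q),+)$, isolate the main term $|A|^4/q^4$, and control the tail by Parseval on the two linear copies of $A$ together with a Cauchy--Schwarz bound on the bilinear sum $T(N)$.

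What this buys you is a sharper threshold. Your estimate $\max_{N\ne 0}|T(N)|\le q^3|A|$ (with the bottleneck at $\rank N=1$, handled exactly as you describe via the $q^2$-to-$1$ collapse of $x\mapsto Nx$) gives a tail of size $\ll q^3|A|^2$, hence $r(D)>0$ already when $|A|\gg q^{7/2}$, which is strictly stronger than the $q^{15/4}$ in the statement. The paper's graph-theoretic method loses here because the mixing lemma treats $U$ and $V$ symmetrically and does not exploit that two of the four variables enter only additively; your Fourier decomposition separates the linear part $z+t$ (handled optimally by Parseval) from the multiplicative part $xy$. On the other hand, the paper's digraph is a reusable black box: the same $(n,d,\lambda)$ estimate for $G_1$ also drives the proofs of Theorems~\ref{thm-sum2''''}, \ref{thm-sum4''''} and \ref{thm-sum1}, whereas your argument is tailored to the specific shape $xy+z+t$.
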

\bigskip
\begin{theorem}\label{thm-sum9}
Let $f(x, y, z, t)=xy+z+t$ be a function from $SL_2(\mathbb{F}_q)^2\times M_2(\mathbb{F}_q)^2$ to  $M_2(\mathbb{F}_q)$. 
For $A\subset SL_2(\mathbb{F}_q), B\subset M_2(\mathbb{F}_q)$ with $|A||B|\gg q^{\frac{27}{4}}$, then we have
\[f(A, A, B, B)=M_2(\mathbb{F}_q).\]
\end{theorem}
Our last result is devoted for an analogue of sum-product problem over the matrix ring $M_2(\mathbb{F}_q)$. 
\bigskip
\begin{theorem}\label{thm-sum1}
For $A\subset M_2(\mathbb{F}_q)$ with $|A|\gg q^{3}$, we have 
\[\max\left\lbrace |A+A|, |AA|\right\rbrace \gg \min \left\lbrace \frac{|A|^2}{q^{7/2}}, ~q^2|A|^{1/2}\right\rbrace.\]
\end{theorem}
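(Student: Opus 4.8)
Write $d:=\max\{|A+A|,\,|AA|\}$; the goal is to show $d\gg\min\{|A|^{2}/q^{7/2},\ q^{2}|A|^{1/2}\}$. The plan is to run the spectral/incidence approach to the sum--product problem introduced by Vinh \cite{vinh} (and going back to Van Vu \cite{van}) inside the ring $M_2(\mathbb{F}_q)$, doing the harmonic analysis on the additive group $M_2(\mathbb{F}_q)\cong\mathbb{F}_q^{4}$. A preliminary reduction: the set of singular matrices in $M_2(\mathbb{F}_q)$ has size $q^{3}+q^{2}-q$, so the hypothesis $|A|\gg q^{3}$ forces $A':=A\cap GL_2(\mathbb{F}_q)$ to satisfy $|A'|\gg|A|$, and we may pass to $A'$ wherever invertibility is needed.

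First I would set up a point--line incidence problem. Let $P:=(A+A)\times(AA)\subseteq M_2(\mathbb{F}_q)^{2}$, so $|P|\le d^{2}$, and for $a\in A'$, $b\in A$ put
\[\ell_{a,b}:=\{(x,y)\in M_2(\mathbb{F}_q)^{2}:\ y=a(x-b)\}.\]
Since $a$ is invertible, the assignment $(a,b)\mapsto\ell_{a,b}$ is injective, so there are $|A'|\,|A|\le|A|^{2}$ such lines. Each $\ell_{a,b}$ contains the $|A|$ points $(b+c,\,ac)$, $c\in A$, which are pairwise distinct (distinct first coordinates) and all lie in $P$. Hence the number $I$ of point--line incidences obeys $I\ge|A'|\,|A|\cdot|A|\gg|A|^{3}$.

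The heart of the proof is an upper bound for $I$. Rewriting the incidence condition as $x-b-a^{-1}y=0$ and expanding over the additive characters of $M_2(\mathbb{F}_q)$ gives
\[I=\frac{1}{q^{4}}\sum_{\xi\in M_2(\mathbb{F}_q)}\widehat{1_{A+A}}(\xi)\,\overline{\widehat{1_{A}}(\xi)}\,\overline{R(\xi)},\qquad R(\xi):=\sum_{a\in A'}\widehat{1_{AA}}\bigl((a^{T})^{-1}\xi\bigr),\]
where $\widehat{1_{S}}(\eta)=\sum_{s\in S}\psi(\mathrm{tr}(\eta^{T}s))$ for a fixed nontrivial additive character $\psi$ of $\mathbb{F}_q$. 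The frequency $\xi=0$ gives the main term $|A+A|\,|A|^{2}\,|AA|/q^{4}\le d^{2}|A|^{2}/q^{4}$. For $\xi\ne0$ I would bound $R(\xi)$ by Cauchy--Schwarz after reindexing $g=(a^{T})^{-1}$ over a subset of $GL_2(\mathbb{F}_q)$, estimating $\sum_{g\in GL_2(\mathbb{F}_q)}|\widehat{1_{AA}}(g\xi)|^{2}$ via the size of the $GL_2(\mathbb{F}_q)$-orbit of $\xi$, Plancherel, and the Kloosterman-sum bound underlying the pseudo-randomness of the special unit Cayley graph $\Gamma(M_2(\mathbb{F}_q),SL_2(\mathbb{F}_q))$ of \cite{Yesim}. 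Combined with Cauchy--Schwarz and Plancherel applied to $\sum_{\xi\ne0}|\widehat{1_{A+A}}(\xi)\widehat{1_{A}}(\xi)|$, this should produce an error contribution of order $q^{7/2}|A|\,d$. I expect the main obstacle to lie exactly here: the rank-one (singular) frequencies $\xi$ are where the naive ``random-model'' estimate degrades, so the bound for $R(\xi)$ must be split according to $\mathrm{rank}(\xi)$ and the role of the singular matrices in $A$ tracked carefully, and this is also the step where the hypothesis $|A|\gg q^{3}$ is genuinely used.

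Putting the two estimates together yields
\[|A|^{3}\ll\frac{d^{2}|A|^{2}}{q^{4}}+q^{7/2}|A|\,d,\qquad\text{equivalently}\qquad |A|^{2}\ll\frac{d^{2}|A|}{q^{4}}+q^{7/2}d.\]
If the first term on the right dominates, then $d^{2}\gg q^{4}|A|$, i.e.\ $d\gg q^{2}|A|^{1/2}$; if the second dominates, then $d\gg|A|^{2}/q^{7/2}$. Either way $d\gg\min\{|A|^{2}/q^{7/2},\ q^{2}|A|^{1/2}\}$, which is the assertion of Theorem \ref{thm-sum1}. Apart from the estimate for $R(\xi)$ and the attendant bookkeeping of singular matrices, every step above is routine Fourier analysis on $\mathbb{F}_q^{4}$ together with an elementary optimization.
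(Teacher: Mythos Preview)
Your global architecture is exactly that of the paper: reduce to $A\subset GL_2(\mathbb{F}_q)$, set up the point set $P=(A+A)\times(AA)$ and the ``lines'' $\ell_{a,b}$ (equivalently, the edges of the sum--product digraph $G_1$ from Section~5), obtain $I\ge |A|^3$ incidences, and then combine a main term $|A+A|\,|AA|\,|A|^2/q^4$ with an error of order $q^{7/2}|A|\sqrt{|A+A|\,|AA|}$ to reach the quadratic inequality and the final case split. That final optimization is identical to the paper's.

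The genuine gap is the error bound itself. In the paper the inequality
\[
e(U,V)\le \frac{|U||V|}{q^4}+cq^{7/2}\sqrt{|U||V|}
\]
is \emph{not} obtained by Fourier analysis on $\mathbb{F}_q^{4}$; it is Lemma~\ref{edge} (the directed expander mixing lemma) applied to the sum--product digraph $G_1$, and the entire content sits in Theorem~\ref{thm1}, where the second eigenvalue bound $\lambda(G_1)\ll q^{7/2}$ is proved by a combinatorial case analysis of $N^{\pm}((A_1,C_1),(A_2,C_2))$ according to the ranks of $A_1-A_2$ and $C_1-C_2$. No Kloosterman sums enter, and the special unit Cayley graph $\Gamma(M_2(\mathbb{F}_q),SL_2(\mathbb{F}_q))$ plays no role in this theorem. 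Your appeal to it is misplaced.

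Your Fourier sketch does not close this gap. After Cauchy--Schwarz you need $\sum_{g\in GL_2}|\widehat{1_{AA}}(g\xi)|^2$. For $\mathrm{rank}(\xi)=2$ the orbit is all of $GL_2$ and Plancherel gives $\le q^4|AA|$, leading to an error $\ll q^2|A|\,d$---too strong, which signals that something else must dominate. For $\mathrm{rank}(\xi)=1$ the stabilizer has size $\sim q^2$, and neither the trivial bound nor Plancherel on the small orbit yields $q^{7/2}$ without further input; this is precisely the place where the paper's rank case analysis does the work. In short, the step you flag as ``the main obstacle'' is in fact the whole proof, and the paper handles it by proving Theorem~\ref{thm1} about $G_1$ rather than by the character-sum route you propose.
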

\bigskip

As a direct consequence from Theorem \ref{thm-sum1}, we obtain the following estimates:
\begin{itemize}
\item If $|A|<q^{11/3}$, then  
\[\max\{|A+A|, |AA|\}\gg \frac{|A|^2}{q^{7/2}}.\]
\item If $|A|\ge q^{11/3}$, then 
\[\max\{|A+A|, |A A|\} \gg q^{2}|A|^{1/2}.\]
\end{itemize}
\section{Proofs of Theorems \ref{nice} and \ref{nice1}}
In the proofs of Theorems \ref{nice} and \ref{nice1}, the following two theorems play the main roles. The first theorem was given by Babai, Nikolay, and L\'{a}szl\'{o} \cite{bb}. 
\bigskip
\begin{theorem}\label{bbb}
For $A, B\subset SL_2(\mathbb{F}_q)$, we have 
\[|AB|\gg \min \left\lbrace q^3, \frac{|A||B|}{q^2}\right\rbrace.\]
\end{theorem}
\bigskip
\begin{theorem}\label{thm-main-s}
For $A\subset SL_2(\mathbb{F}_q)$, $B\subset M_2(\mathbb{F}_q)$, we have 
\[|A+B|\gg \min \left\lbrace \frac{|A|^2|B|}{q^3}, ~|A|q\right\rbrace.\]
\end{theorem}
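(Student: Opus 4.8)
The plan is to set up a spectral / expander-mixing argument on the special unit Cayley graph $\Gamma(M_2(\Fq), SL_2(\Fq))$, whose vertex set is $M_2(\Fq)$ and in which $X \sim Y$ iff $X - Y \in SL_2(\Fq)$. This is a $|SL_2(\Fq)|$-regular graph on $q^4$ vertices, and the key input — recorded in \cite{Yesim} via a Kloosterman sum estimate — is that its nontrivial eigenvalues are $O(q^{3/2})$. The quantity we want to bound, $|A+B|$, is exactly the size of the set of vertices $Y \in M_2(\Fq)$ that are reachable from some $X \in B$ by an edge labelled by an element of $A$; equivalently, if $A = SL_2(\Fq)$ the whole analysis is just the neighbourhood of $B$, and for general $A$ we restrict the edge set to labels in $A$.

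First I would phrase $|A+B|$ through a standard second-moment/Cauchy--Schwarz device. Let $N = A + B \subset M_2(\Fq)$. Counting the number of solutions to $a + b = n$ with $a\in A$, $b\in B$, $n\in N$ gives exactly $|A||B|$, so by Cauchy--Schwarz
\[
|A|^2|B|^2 = \Bigl(\sum_{n \in N} \#\{(a,b): a+b=n\}\Bigr)^2 \le |N|\cdot \sum_{n\in M_2(\Fq)} \#\{(a,b): a+b=n\}^2.
\]
The inner sum counts quadruples $(a,a',b,b')$ with $a - a' = b' - b$, i.e.\ pairs $(a,a') \in A\times A$ and $(b,b')\in B\times B$ with $a - a' = b' - b$. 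The next step is to estimate this quadruple count. Writing it as $\sum_{g \in M_2(\Fq)} r_{A}(g)\, r_{B}(g)$ where $r_A(g) = \#\{(a,a')\in A^2: a-a' = g\}$ and $r_B(g) = \#\{(b,b')\in B^2: b'-b = g\}$, I observe that $r_A(g)$ is supported on $g \in A - A \subset SL_2(\Fq)\cup\{0\}\cup(\text{differences})$ — more precisely $a - a'$ need not lie in $SL_2$, so instead I would split off the diagonal $g = 0$ (contributing $|A||B|$) and, for $g \neq 0$, use the spectral bound: the number of pairs $(a,a')\in A^2$ with $a - a'$ \emph{any fixed} value is controlled, but what the special unit Cayley graph directly gives us is the number of pairs $(x,y)\in A^2$ with $x - y \in SL_2(\Fq)$ — which is the edge count $e(A,A)$ inside the graph.

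So the cleaner route is: apply the expander mixing lemma to the graph $\Gamma(M_2(\Fq), SL_2(\Fq))$ directly to the pair of sets $B$ and $-A+N$ or, most simply, bound the quadruple count by
\[
\sum_{(b,b')\in B^2} \#\{(a,a')\in A^2 : a - a' = b' - b\} \le \#\{(a,a',b,b'): a + b = a' + b'\},
\]
and recognize the right-hand side as the number of \emph{pairs of edges sharing structure} — specifically, fixing $a + b = a' + b' = n$, these are pairs of $A$-labelled edges into the common vertex $n$. Summing over $n$, this is $\sum_n d_A(n)^2$ where $d_A(n) = \#\{(a,b)\in A\times B: a+b = n\}$ is the restricted degree, and the expander mixing lemma for the subgraph with edge-labels in $A$ gives
\[
\sum_{n} d_A(n)^2 \le \frac{|A|^2|B|^2}{q^4} + O\!\bigl(q^{3/2}\bigr)\cdot (\text{something like } |A||B|),
\]
after which solving $|A|^2|B|^2 \le |N|\bigl(\tfrac{|A|^2|B|^2}{q^4} + q^{3/2}|A||B|\bigr)$ for $|N|$ yields $|N| \gg \min\{q^4/q^{3/2}\cdot(\dots), \dots\}$; tracking the exponents carefully should collapse to $|A+B| \gg \min\{|A|^2|B|/q^3,\ |A|q\}$, the two terms corresponding respectively to the ``main term dominates'' and ``error term dominates'' regimes of the mixing inequality.

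The main obstacle I anticipate is \emph{bookkeeping the role of $SL_2$ versus $M_2$ correctly}: the graph is regular of degree $|SL_2(\Fq)| \sim q^3$ on $q^4$ vertices, so the naive density is $q^{-1}$, not $q^{-4}$, and the eigenvalue bound $q^{3/2}$ must be compared against this degree. I expect the correct form of the mixing estimate to read $\bigl|e(S,T) - \tfrac{|SL_2(\Fq)|}{q^4}|S||T|\bigr| \le q^{3/2}\sqrt{|S||T|}$, and the whole proof hinges on feeding in $S = B$, $T = -A + N$ (or an equivalent incidence reformulation that respects that $A \subset SL_2$ so that ``$a + b$'' edges are genuinely graph edges), then choosing the Cauchy--Schwarz grouping so that the diagonal contribution $|A||B|$ and the off-diagonal spectral error $q^{3/2}\sqrt{|A|^2|B|^2}$ combine to give exactly the stated two-term minimum. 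A secondary, purely computational, hazard is making sure the $q$-powers in ``$\min\{|A|^2|B|/q^3, |A|q\}$'' come out right — the crossover happens at $|A||B| \sim q^4$, which is a natural threshold (it is where $B$ becomes a positive proportion of $M_2(\Fq)$ relative to $A$), and this sanity check will guide the constant-chasing.
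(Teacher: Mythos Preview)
You have correctly identified the relevant tool (the special unit Cayley graph and its $(q^4,\sim q^3, 2q^{3/2})$ parameters) and the right lemma (expander mixing), but the route through Cauchy--Schwarz and additive energy is an unnecessary detour and, as written, has a real gap. The step where you assert
\[
\sum_n d_A(n)^2 \le \frac{|A|^2|B|^2}{q^4} + O(q^{3/2})\,|A||B|
\]
``by the expander mixing lemma for the subgraph with edge-labels in $A$'' is not justified: once you restrict to the $A$-labelled edges you no longer have a regular graph with a known eigenvalue bound, so the mixing lemma does not apply to that subgraph. Moreover, even if this energy bound held, the Cauchy--Schwarz inequality $|A|^2|B|^2 \le |N|\cdot E$ would give $|A+B|\gg\min\{q^4,\,|A||B|/q^{3/2}\}$, which is not the stated conclusion $\min\{|A|^2|B|/q^3,\,|A|q\}$.

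The paper's proof is exactly the direct argument you briefly gesture at and then abandon. One simply applies the mixing lemma to the pair $(A+B,\,B)$ in the Cayley graph itself (not any edge-restricted subgraph). Because $A\subset SL_2(\mathbb{F}_q)$, every pair $(a,b)\in A\times B$ contributes a genuine edge between $a+b$ and $b$, so $e(A+B,B)\ge |A||B|$. On the other hand the mixing lemma gives
\[
e(A+B,B)\le \frac{|A+B||B|}{q}+2q^{3/2}\sqrt{|A+B||B|}.
\]
Setting $x=\sqrt{|A+B|}$ and solving the resulting quadratic in $x$ yields $x\gg\min\{|A||B|^{1/2}/q^{3/2},\,|A|^{1/2}q^{1/2}\}$, which is the claim. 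No second-moment or energy estimate is needed; the crucial point you were missing is that the hypothesis $A\subset SL_2(\mathbb{F}_q)$ is used precisely (and only) to guarantee the lower bound $e(A+B,B)\ge|A||B|$, after which the full Cayley graph --- not a restricted subgraph --- handles the upper bound.
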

\bigskip
To prove Theorem \ref{thm-main-s}, we first recall the expander mixing lemma for un-directed graphs. 
For an un-directed graph $G$ of order $n$, let $\lambda_1 \geq \lambda_2 \geq \ldots \geq \lambda_n$ be
the eigenvalues of its adjacency matrix. The quantity $\lambda (G) = \max
\{\lambda_2, - \lambda_n \}$ is called the second largest eigenvalue of $G$. A graph $G
= (V, E)$ is called an $(n, d, \lambda)$-graph if it is $d$-regular, has $n$
vertices, and the second largest eigenvalue of the adjacency matrix of $G$ is at most $\lambda$. 

Let $G$ be an $(n, d, \lambda)$-graph. For two  vertex subsets $B,
C \subseteq V$, let $e (B, C)$ be the number of edges between $B$ and $C$ in $G$. The following lemma gives us an estimate on the size of $e(B, C)$.
\bigskip
\begin{lemma}[Corollary 9.2.5, \cite{as}]\label{edge1}
  Let $G = (V, E)$ be an $(n, d, \lambda)$-graph. For any two sets $B, C
  \subseteq V$, we have
  \[ \left| e (B, C) - \frac{d|B | |C|}{n} \right| \leq \lambda \sqrt{|B| |C|}. \]
\end{lemma}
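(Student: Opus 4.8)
The plan is to prove the lemma by the standard spectral argument: decompose the characteristic vectors of $B$ and $C$ in an eigenbasis of the adjacency matrix, peel off the contribution of the top eigenvector (which produces the main term $d|B||C|/n$), and bound the remaining contribution using that all other eigenvalues are at most $\lambda$ in absolute value.

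First I would let $M$ be the adjacency matrix of $G$ and fix an orthonormal eigenbasis $v_1,\dots,v_n$ of $\mathbb{R}^n$ for $M$, with eigenvalues $\lambda_1=d\ge\lambda_2\ge\cdots\ge\lambda_n$. Since $G$ is $d$-regular, the all-ones vector $\mathbf{1}$ satisfies $M\mathbf{1}=d\mathbf{1}$, and since $d$ is the largest eigenvalue in absolute value (Perron--Frobenius, using $d$-regularity), we may take $v_1=\mathbf{1}/\sqrt{n}$. By the definition of $\lambda(G)=\max\{\lambda_2,-\lambda_n\}$ and the hypothesis $\lambda(G)\le\lambda$, we get $-\lambda\le\lambda_n\le\lambda_i\le\lambda_2\le\lambda$, hence $|\lambda_i|\le\lambda$ for every $i\ge 2$.

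Next I would write $\mathbf{1}_B,\mathbf{1}_C\in\{0,1\}^n$ for the characteristic vectors of $B$ and $C$ and note that $e(B,C)=\mathbf{1}_B^{\top}M\mathbf{1}_C$, with the convention that an edge with both endpoints in $B\cap C$ is counted twice. Expanding $\mathbf{1}_B=\sum_i\alpha_i v_i$ and $\mathbf{1}_C=\sum_i\beta_i v_i$, the top coefficients are $\alpha_1=\langle\mathbf{1}_B,v_1\rangle=|B|/\sqrt{n}$ and $\beta_1=|C|/\sqrt{n}$, while orthonormality of the $v_i$ gives $\sum_i\alpha_i^2=|B|$ and $\sum_i\beta_i^2=|C|$. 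Then
\[
e(B,C)=\sum_{i=1}^{n}\lambda_i\alpha_i\beta_i=\frac{d|B||C|}{n}+\sum_{i=2}^{n}\lambda_i\alpha_i\beta_i ,
\]
so the quantity to be bounded is exactly $\sum_{i\ge 2}\lambda_i\alpha_i\beta_i$. Using $|\lambda_i|\le\lambda$, Cauchy--Schwarz, and $\sum_{i\ge2}\alpha_i^2\le|B|$, $\sum_{i\ge2}\beta_i^2\le|C|$,
\[
\Big|\sum_{i=2}^{n}\lambda_i\alpha_i\beta_i\Big|\le\lambda\sum_{i=2}^{n}|\alpha_i|\,|\beta_i|\le\lambda\Big(\sum_{i=2}^{n}\alpha_i^2\Big)^{1/2}\Big(\sum_{i=2}^{n}\beta_i^2\Big)^{1/2}\le\lambda\sqrt{|B||C|},
\]
which is the claimed estimate.

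The proof is essentially pure linear algebra, so there is no real obstacle; the only points needing a little care are verifying that $\mathbf{1}/\sqrt{n}$ is genuinely an eigenvector for the extremal eigenvalue (this is exactly where $d$-regularity is used) and being consistent about the double-counting of edges inside $B\cap C$ in the definition of $e(B,C)$. Since the statement is quoted as Corollary 9.2.5 of \cite{as}, one could simply cite it, but the argument above is short enough to reproduce if desired.
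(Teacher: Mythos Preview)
Your argument is the standard spectral proof of the expander mixing lemma and is correct. The paper does not give its own proof of this statement---it simply cites it as Corollary~9.2.5 of \cite{as}---so there is nothing to compare; your write-up is exactly the proof one finds in that reference.
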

\bigskip
Let $\Gamma(M_2(\mathbb{F}_q), SL_2(\mathbb{F}_q))$ be the special-unit Cayley graph whose vertex set is $M_2(\mathbb{F}_q)$, and there is an edge between $a$ and $b$ if $a-b\in SL_2(\mathbb{F}_q)$. From the fact that $\det(a-b)=\det(b-a)$, the graph $\Gamma(M_2(\mathbb{F}_q), SL_2(\mathbb{F}_q))$ is an undirected graph. Using the Kloosterman sum, the first listed author \cite{Yesim} showed that $\Gamma(M_2(\mathbb{F}_q), SL_2(\mathbb{F}_q))$ is a connected graph and is an
\begin{equation}\label{x-0}(q^4, \sim q^3, 2q^{3/2})-\mbox{graph}.\end{equation}
It is interesting to note that the graph $\Gamma(M_2(\mathbb{F}_q), SL_2(\mathbb{F}_q))$ has diameter $2$. We refer the interested reader to \cite{Yesim} for more discussions. 
\paragraph{Proof of Theorem \ref{thm-main-s}:}
We consider the number of edges between $A+B$ and $B$ in the graph $\Gamma(M_2(\mathbb{F}_q), SL_2(\mathbb{F}_q))$. Let $N$ be that number.  Since $A$ is a set in $SL_2(\mathbb{F}_q)$, we always have an edge between $a+b\in A+B$ and $b\in B$ for any $a\in A, b\in B$. So $N\ge |A||B|$. 

Applying Lemma \ref{edge1} and (\ref{x-0}), we have 

\[|A||B|\le N\le \frac{|A+B||B|}{q}+2q^{3/2}\sqrt{|A+B||B|}.\]

Set $x=\sqrt{|A+B|}.$ Then we see
\[x^2|B|^{1/2}+2xq^{5/2}-q|A||B|^{1/2}\ge 0.\]
Solving this inequality, we obtain 
\[x\gg \min \left\lbrace \frac{|A||B|^{1/2}}{q^{3/2}}, |A|^{1/2}q^{1/2}\right\rbrace.\]
This completes the proof of the theorem. 
\paragraph{Proof of Theorem \ref{nice}:}
For $B, C\subset SL_2(\mathbb{F}_q)$, it follows from Theorem \ref{bbb}, we have
\begin{equation}\label{sua-1}|BC|\gg \min \left\lbrace q^3, \frac{|B||C|}{q^2}\right\rbrace.\end{equation}
Since $B$ and $C$ are subsets in $SL_2(\mathbb{F}_q)$, we have $BC$ is still a subset in $SL_2(\mathbb{F}_q)$. Applying Theorem \ref{thm-main-s}, we obtain 
\begin{equation}\label{sua2}|f(A, B, C)|=|A+BC|\gg \min\left\lbrace \frac{|BC|^2|A|}{q^3}, |BC|q\right\rbrace.\end{equation}
Combining (\ref{sua-1}) and (\ref{sua2}), we have
\[|f(A,B, C)|\gg \min \left\lbrace q^4, q^3|A|, \frac{|A||B|^2|C|^2}{q^7}, \frac{|B||C|}{q}\right\rbrace.\]
This completes the proof of the theorem. $\square$

\bigskip
To prove Theorem \ref{nice1}, we need to define an analogue of the special-unit Cayley graph.  
For $\alpha\ne 0$, let $G_\alpha$ be the graph whose the vertex set is $M_2(\mathbb{F}_q)$, and there is an edge between two vertices $a$ and $b$ if $\det(a-b)=\alpha$. It is not hard to check that $G_\alpha$ is isomorphic to $G_1=\Gamma(M_2(\mathbb{F}_q), SL_2(\mathbb{F}_q))$. Thus it is an 
\[(q^4, \sim q^3, 2q^{3/2})-\mbox{graph}.\]

We will also make use of the following lemma. 
\bigskip
\begin{lemma}\label{lmcuchuoi}
Let $i$ and $j$ be non-zero elements in $\mathbb{F}_q$. Suppose that $D_i$ and $D_j$ are two sets of matrices of determinants $i$ and $j$, respectively.
Define 
\[D_i':=\left\lbrace \begin{pmatrix}
i^{-1}a&i^{-1}b\\
c&d
\end{pmatrix}\colon \begin{pmatrix}
a&b\\
c&d
\end{pmatrix}\in D_i\right\rbrace\subset SL_2(\mathbb{F}_q), \]
and 
\[D_j':=\left\lbrace \begin{pmatrix}
j^{-1}a&b\\
j^{-1}c&d
\end{pmatrix}\colon \begin{pmatrix}
a&b\\
c&d
\end{pmatrix}\in D_j\right\rbrace\subset SL_2(\mathbb{F}_q).\]
Then we have 
\[|D_iD_j|=|D_i'D_j'|.\]
\end{lemma}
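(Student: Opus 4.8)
\textbf{Proof proposal for Lemma \ref{lmcuchuoi}.}

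The plan is to exhibit an explicit bijection between the product set $D_iD_j$ and the product set $D_i'D_j'$, obtained by multiplying on the left and on the right by fixed diagonal matrices. Observe first that the passage from a matrix $\begin{pmatrix} a & b \\ c & d \end{pmatrix}$ of determinant $i$ to the matrix $\begin{pmatrix} i^{-1}a & i^{-1}b \\ c & d \end{pmatrix}$ is exactly left multiplication by the diagonal matrix $P_i := \begin{pmatrix} i^{-1} & 0 \\ 0 & 1 \end{pmatrix}$, whose determinant is $i^{-1}$; hence the resulting matrix indeed has determinant $1$, confirming $D_i' \subset SL_2(\mathbb{F}_q)$, and $D_i' = P_i D_i$. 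Similarly, sending $\begin{pmatrix} a & b \\ c & d \end{pmatrix}$ (of determinant $j$) to $\begin{pmatrix} j^{-1}a & b \\ j^{-1}c & d \end{pmatrix}$ is right multiplication by $Q_j := \begin{pmatrix} j^{-1} & 0 \\ 0 & 1 \end{pmatrix}$, so $D_j' = D_j Q_j$. (I note in passing that the statement of the lemma writes $D_i$ rather than $D_j$ inside the definition of $D_j'$; this is a typo, and the intended object is $D_j' = D_j Q_j$.)

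Next I would compute the product directly: $D_i' D_j' = (P_i D_i)(D_j Q_j) = P_i (D_i D_j) Q_j$. Since $P_i$ and $Q_j$ are fixed invertible matrices, the maps $X \mapsto P_i X$ and $X \mapsto X Q_j$ are bijections of $M_2(\mathbb{F}_q)$, so $X \mapsto P_i X Q_j$ is a bijection of $M_2(\mathbb{F}_q)$ onto itself. Restricting this bijection to the set $D_i D_j$ gives a bijection onto its image, which is precisely $P_i(D_iD_j)Q_j = D_i'D_j'$. Therefore $|D_iD_j| = |P_i(D_iD_j)Q_j| = |D_i'D_j'|$, as claimed.

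There is essentially no obstacle here: the entire content is the observation that the two coordinate rescalings defining $D_i'$ and $D_j'$ are, respectively, a left and a right multiplication by fixed diagonal matrices, and that such one-sided multiplications preserve cardinalities of product sets because $(P X)(Y Q) = P(XY)Q$ pulls the constant factors outside. The only point requiring a modicum of care is to make sure the rescaling is applied on the correct side — left for $D_i$ (scaling a row, hence the first row, sends columns-uniformly and corresponds to $P_i$ acting on the left) and right for $D_j$ (scaling the first column corresponds to $Q_j$ acting on the right) — so that the two constant matrices end up on the outside of the product $D_iD_j$ rather than sandwiched in the middle, where they could not be cancelled. Once the sidedness is correct, the identity $D_i'D_j' = P_i(D_iD_j)Q_j$ is immediate and the cardinality equality follows.
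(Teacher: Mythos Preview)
Your proposal is correct and is essentially the same argument as the paper's, packaged more cleanly: the paper proceeds in two steps, showing $|D_iD_j|=|D_i'D_j|$ and then $|D_i'D_j|=|D_i'D_j'|$ by writing out the matrix products entrywise, whereas you identify the row/column rescalings as left and right multiplication by the fixed invertible diagonal matrices $P_i$ and $Q_j$ and conclude in one line that $D_i'D_j'=P_i(D_iD_j)Q_j$. Both arguments rest on the same observation, and your remark about the $D_i$/$D_j$ typo in the definition of $D_j'$ is also correct.
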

\begin{proof}
We first prove that $|D_iD_j|=|D_i'D_j|$. Indeed, let $x, y$ be two matrices in $D_i, D_j$, respectively as follows: 
\[x=\begin{pmatrix}
a&b\\
c&d
\end{pmatrix}, y=\begin{pmatrix}
e&f\\
g&h
\end{pmatrix}.
\]
We have 
\[xy=\begin{pmatrix}
ae+bg&af+bh\\
ce+gd&cf+dh
\end{pmatrix}.\]
Let $x'$ be the corresponding matrix of $x$ in $D_i'$. We have 
\[x'=\begin{pmatrix}
i^{-1}a&i^{-1}b\\
c&d
\end{pmatrix}\in SL_2(\mathbb{F}_q).\]
Observe that
\[x'y=\begin{pmatrix}
i^{-1}(ae+bg)&i^{-1}(af+bh)\\
ce+gd&cf+dh
\end{pmatrix}.\]
Since $i\ne 0$, there is a one-to-one correspondence between matrices in $D_iD_j$ and $D_i'D_j$. 

Using the same argument, we can also indicate that there is a correspondence between $D_i'D_j$ and $D_i'D_j'$. In other words, we have 
\[|D_iD_j|=|D_i'D_j|=|D_i'D_j'|.\]
\end{proof}
\paragraph{Proof of Theorem \ref{nice1}:}
We partition the set $B+C$ into $q$ subsets $D_\alpha$, $\alpha\in \mathbb{F}_q$, of matrices of determinant $\alpha$.

Since $B\subset SL_2(\mathbb{F}_q)$ and $C\subset M_2(\mathbb{F}_q)$, Theorem \ref{thm-main-s} gives us 
\[|B+C|\gg \min \left\lbrace \frac{|B|^2|C|}{q^3}, |B|q\right\rbrace>2|D_0|\sim q^3,\]
whenever $|B|, |C|\gg q^2$. Thus, without loss of generality, we assume that \[\sum_{\alpha\ne 0}|D_\alpha|\sim |B+C|.\]
Since the matrices in $AD_\alpha$ are of determinant $\alpha$, the sets $\{AD_\alpha\}_\alpha$ are distinct. Therefore, we have 
\[|A (B+C)|\gg \sum_{\alpha\ne 0}|A D_{\alpha}|.\] 

On the other hand, for each $\alpha\ne 0$, let 
\[D_\alpha':=\left\lbrace \begin{pmatrix}
\alpha^{-1}a&\alpha^{-1}b\\
c&d
\end{pmatrix}\colon \begin{pmatrix}
a&b\\
c&d
\end{pmatrix}\in D_\alpha\right\rbrace\subset SL_2(\mathbb{F}_q).\]
It is clear that $|D_\alpha'|=|D_\alpha|$. Lemma \ref{lmcuchuoi} tells us that $|AD_\alpha|=|AD_\alpha'|$. Hence, using Theorem \ref{bbb}, we get
\[|AD_{\alpha}|=|A D_{\alpha}'|\gg \min\left\lbrace q^3, \frac{|A||D_{\alpha}|}{q^2}\right\rbrace.\]
Summing over all $\alpha\ne 0$, we achieve 

\[|A(B+C)|\gg \sum_{\alpha\ne 0}|AD_\alpha| \gg \min\left\lbrace q^4, \frac{|A||B+C|}{q^2}\right\rbrace\gg \min\left\lbrace q^4, \frac{|A||B|^2|C|}{q^5}, \frac{|A||B|}{q}\right\rbrace.\]
This completes the proof of the theorem. $\square$
\section{Proofs of Corollaries \ref{Adidaphat-09} and \ref{Adidaphat-10}}
\paragraph{Proof of Corollary \ref{Adidaphat-09}:} Since $|A|\gg q^{7/2}$, without loss of generality, we may assume that $A\subset GL_2(\mathbb{F}_q)$. Thus, there exist $\beta\in \mathbb{F}_q\setminus \{0\}$ and a subset $A'\subset A$ such that  all matrices in $A'$ are of determinant $\beta$ and $|A'|\gg q^{5/2}$. 

We note that if we use the $(n, d, \lambda)$ form of the graph $G_\alpha$ from the previous section, then we are able to show that 

\[|X+Y|\gg \min \left\lbrace \frac{|X|^2|Y|}{q^3}, ~|X|q\right\rbrace,\]
for any set $X$ of matrices of determinant $\alpha$ and $Y\subset M_2(\mathbb{F}_q)$. So, with $\alpha=\beta^2$, we have
\[|A'+A'A'|\gg \min \left\lbrace \frac{|A'A'|^2|A'|}{q^3}, |A'A'|q\right\rbrace.\]
Let $A"$ be the set of corresponding matrices of determinant $1$ of matrices in $A'$ in the form of Lemma \ref{lmcuchuoi}. It follows from Lemma \ref{lmcuchuoi} and Theorem \ref{bbb} that  
\[|A'A'|=|A"A"|\gg \min\left\lbrace q^3, \frac{|A"||A"|}{q^2}\right\rbrace=\min\left\lbrace q^3, \frac{|A'|^2}{q^2}\right\rbrace.\]
Therefore, 
\[|A'+A'A'|\gg \min\left\lbrace q^4, \frac{|A'|^2}{q}, |A'|q^3, \frac{|A'|^5}{q^7}\right\rbrace\gg q^4,\]
whenever $|A'|\gg q^{5/2}$, which concludes the proof of the corollary. $\square$
\paragraph{Proof of Corollary \ref{Adidaphat-10}:}
The proof of Corollary \ref{Adidaphat-10} is almost the same with that of Corollary \ref{Adidaphat-09}, except that we follow the proof of Theorem \ref{nice1} for the set $A'$. 
$\square$
\section{Proofs of Theorems \ref{thm-sum2''''}, \ref{thm-sum4''''}, \ref{thm-sum7} and \ref{thm-sum1}}
Let $G$ be a directed graph (digraph) on $n$ vertices where the in-degree and out-degree of each vertex are both $d$. 

Let $A_G$ be the adjacency matrix of $G$, i.e., $a_{ij}=1$ if there is a directed edge from $i$ to $j$ and zero otherwise. Suppose that $\lambda_1=d, \lambda_2, \ldots, \lambda_n$ are the eigenvalues of $A_G$. These eigenvalues can be complex, so we cannot order them, but it is known that $|\lambda_i|\le d$ for all $1\le i \le n$. Define $\lambda(G):=\max_{|\lambda_i|\ne d}|\lambda_i|$. This value is called the second largest eigenvalue of $A_G$. 

We say that the $n\times n$ matrix $A$ is normal if $A^tA = AA^t$, where $A^t$ is the transpose of $A$. The graph $G$ is normal if $A_G$ is normal. There is a simple way to check whenever $G$ is normal or not. Indeed, for any two vertices $x$ and $y$, let $N^+(x,y)$ be the set of vertices $z$ such that $\overrightarrow{xz}, \overrightarrow{yz}$ are edges, and $N^-(x,y)$ be the set of vertices $z$ such that $\overrightarrow{zx}, \overrightarrow{zy}$ are  edges. By a direct computation, we have $A_G$ is normal if and only if $|N^+(x,y)| = |N^-(x,y)|$ for any two vertices $x$ and $y$. 

A digraph $G$ is called an $(n, d, \lambda)$-digraph if $G$ has $n$ vertices, the in-degree and out-degree of each vertex are both $d$, and $\lambda(G) \leq \lambda$.  Let $G$ be an $(n,d,\lambda)$-digraph. 

The following lemma is the directed version of Lemma \ref{edge1}. This was developed by Vu \cite{van}. 
\bigskip
\begin{lemma}[Vu, \cite{van}]\label{edge}
  Let $G = (V, E)$ be an $(n, d, \lambda)$-digraph. For any two sets $B, C
  \subset V$, we have
  \[ \left| e(B, C) - \frac{d}{n}|B | |C| \right| \leq \lambda \sqrt{|B| |C|},\]
  where $e(B, C)$ be the number of ordered pairs $(u, w)$ such that
$u \in B$, $w \in C$, and $\overrightarrow{uw} \in E(G)$.
\end{lemma}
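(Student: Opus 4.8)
The plan is to run the standard spectral proof of the expander mixing lemma, being careful that a digraph's adjacency matrix can have complex eigenvalues --- which is exactly why the normality hypothesis is present. Let $M=A_G$ be the adjacency matrix of $G$. Normality means $MM^{t}=M^{t}M$, so by the spectral theorem $M$ has an orthonormal basis of eigenvectors $v_1,\dots,v_n$ of $\mathbb{C}^{n}$, say $Mv_i=\lambda_i v_i$. Since all in- and out-degrees equal $d$, the all-ones vector is an eigenvector for $d$; normalizing, take $v_1=n^{-1/2}\vv{1}$ and $\lambda_1=d$. We need $|\lambda_i|\le\lambda$ for all $i\ge2$: this is the content of $\lambda(G)\le\lambda$, provided $\lambda_1=d$ is the only eigenvalue of modulus $d$, which holds for the strongly connected, aperiodic digraphs used in the applications (Perron--Frobenius). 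This is really the only nontrivial point; everything after it is bookkeeping.

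First I would rewrite the quantity of interest as a bilinear form in the indicator vectors $\vv{1}_B,\vv{1}_C\in\{0,1\}^{n}$ of $B$ and $C$. Directly from the definition, $e(B,C)=\vv{1}_B^{t}M\vv{1}_C$, and since these vectors and $M$ are real this equals $\langle\vv{1}_B,M\vv{1}_C\rangle$ for the Hermitian form $\langle x,y\rangle=\sum_i\overline{x_i}y_i$. Expanding $\vv{1}_B=\sum_i\alpha_i v_i$, $\vv{1}_C=\sum_i\beta_i v_i$ with $\alpha_i=\langle v_i,\vv{1}_B\rangle$, $\beta_i=\langle v_i,\vv{1}_C\rangle$, orthonormality of the $v_i$ gives
\[ e(B,C)=\sum_{i=1}^{n}\lambda_i\,\overline{\alpha_i}\,\beta_i. \]
Because $v_1=n^{-1/2}\vv{1}$, one has $\alpha_1=n^{-1/2}|B|$ and $\beta_1=n^{-1/2}|C|$, so the $i=1$ term is $\lambda_1\alpha_1\beta_1=\frac{d}{n}|B||C|$, the asserted main term.

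It remains to bound the tail $\sum_{i\ge2}\lambda_i\overline{\alpha_i}\beta_i$. Using $|\lambda_i|\le\lambda$ for $i\ge2$, then Cauchy--Schwarz, then Parseval ($\sum_i|\alpha_i|^{2}=\|\vv{1}_B\|^{2}=|B|$, similarly $\sum_i|\beta_i|^{2}=|C|$), we obtain
\[ \Bigl|e(B,C)-\frac{d}{n}|B||C|\Bigr|\le\lambda\sum_{i\ge2}|\alpha_i|\,|\beta_i|\le\lambda\Bigl(\sum_{i\ge2}|\alpha_i|^{2}\Bigr)^{1/2}\Bigl(\sum_{i\ge2}|\beta_i|^{2}\Bigr)^{1/2}\le\lambda\sqrt{|B|\,|C|}, \]
which is the claim. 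The hard part, such as it is, is not the estimate but the setup: one must genuinely use normality (not just $d$-regularity) to get an orthonormal eigenbasis, and one must know that $d$ is the unique eigenvalue of maximal modulus so that all the remaining $\lambda_i$ are controlled by $\lambda$.
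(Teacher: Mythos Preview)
The paper does not supply a proof of this lemma; it is quoted from Vu \cite{van} and used as a black box. Your argument is the standard spectral proof of the expander mixing lemma, carried out correctly in the normal-digraph setting: normality gives an orthonormal eigenbasis, the all-ones direction isolates the main term $\frac{d}{n}|B||C|$, and Cauchy--Schwarz plus Parseval handle the tail. The one genuine subtlety---that $d$ must be the unique eigenvalue of modulus $d$ so that every remaining $|\lambda_i|$ is bounded by $\lambda$---you have already identified and handled via Perron--Frobenius for the strongly connected, aperiodic digraphs that actually arise in the applications.
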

To prove Theorems \ref{thm-sum2''''}, \ref{thm-sum4''''}, \ref{thm-sum7} and \ref{thm-sum1}, we need to construct the \textit{sum-product digraph} over $M_2(\mathbb{F}_q)$. Our construction is similar to that of Solymosi \cite{ss}.
\subsection{Sum-product digraph over $M_2(\mathbb{F}_q)$}
Let $G_1=(V_1, E_1)$ be the sum-product digraph over $M_2(\mathbb{F}_q)$ defined as follows:
\[V_1=M_2(\mathbb{F}_q)\times M_2(\mathbb{F}_q),\]
and there is an edge from $(A, C)$ to $(B, D)$ if 
\[A\cdot B=C+D.\]
In the following theorem, we study the $(n, d, \lambda)$ form of this digraph. 
\bigskip
\begin{theorem}\label{thm1}
The sum-product digraph $G_1$ is an 
\[(q^8, q^4, c_1q^{7/2})-\mbox{digraph}\]
for some positive constant $c_1$. 
\end{theorem}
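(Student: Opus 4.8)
The plan is to verify the three claimed parameters in turn. The vertex count $n = q^8$ is immediate since $V_1 = M_2(\mathbb{F}_q) \times M_2(\mathbb{F}_q)$ and $|M_2(\mathbb{F}_q)| = q^4$. For the degree, fix a vertex $(A, C)$; an out-neighbour is a pair $(B, D)$ with $D = A\cdot B - C$, so $D$ is determined by $B$ and $B$ ranges freely over $M_2(\mathbb{F}_q)$, giving out-degree exactly $q^4$. Symmetrically, an in-neighbour of $(B, D)$ is a pair $(A, C)$ with $C = A \cdot B - D$, so $A$ ranges freely and the in-degree is also $q^4$. One should also check normality via the criterion stated just before the theorem: for vertices $x = (A_1, C_1)$, $y = (A_2, C_2)$, a common out-neighbour $(B,D)$ must satisfy $A_1 B - C_1 = D = A_2 B - C_2$, i.e. $(A_1 - A_2) B = C_1 - C_2$; a common in-neighbour $(A, C)$ of $x, y$ (viewed now as targets, so $x = (B_1, D_1)$, $y = (B_2, D_2)$) must satisfy $A(B_1 - B_2) = D_1 - D_2$. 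The number of solutions $B$ of $MB = N$ and the number of solutions $A$ of $AM = N$ (for fixed $M, N$) agree — both equal $q^{4 - 2\rank(M)}$ if $N$ lies in the appropriate column/row space and $0$ otherwise, and one checks the solvability conditions also match up — so $|N^+(x,y)| = |N^-(x,y)|$ and the digraph is normal. Normality is what lets us speak of $\lambda(G_1)$ and apply Lemma 5.1.

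The substantive part is the eigenvalue bound $\lambda(G_1) \le c_1 q^{7/2}$. Since $G_1$ is normal and a Cayley-type digraph, the natural approach is a direct character-sum computation: the adjacency operator acts on functions on $V_1 = M_2(\mathbb{F}_q)^2$, and one expands in additive characters $\psi$ of $(M_2(\mathbb{F}_q), +) \cong \mathbb{F}_q^4$. Concretely, for an additive character indexed by a pair $(U, W) \in M_2(\mathbb{F}_q)^2$, the relevant eigenvalue-type quantity is
\[
\sum_{B \in M_2(\mathbb{F}_q)} \psi\bigl(\mathrm{tr}(U^t (A B - C)) + \cdots\bigr),
\]
and after separating variables the bound reduces to estimating sums of the form $\sum_{A, B} \psi(\mathrm{tr}(U^t A B))$ over $A, B \in M_2(\mathbb{F}_q)$ with $U$ fixed nonzero. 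This is exactly a matrix analogue of a Gauss/Kloosterman-type sum: the inner sum over $B$ vanishes unless $U^t A = 0$, which for $U \ne 0$ forces $A$ into a subspace of dimension $2\cdot\rank(U^\perp)$, and a careful count gives a saving of $q^{1/2}$ per matrix dimension beyond the trivial bound, landing at $O(q^{7/2})$. Alternatively — and this is probably cleaner to write — one can cite the pseudorandomness of the analogous construction of Solymosi \cite{ss}, of which this is the $M_2(\mathbb{F}_q)$-avatar, together with the spectral input on $SL_2$ / $M_2$ already used in the paper (the Kloosterman bound from \cite{Yesim} that yields the $2q^{3/2}$ in \eqref{x-0}), since the sum-product digraph essentially factors through copies of the special-unit Cayley graph fibered over the determinant.

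I expect the eigenvalue estimate to be the main obstacle: one must organize the $4$-dimensional character sum so that the multiplicative twist $AB$ (rather than a simple sum) is handled, and control the degenerate cases where $U$ or $W$ has rank $1$ separately from the full-rank case, since these contribute the extremal $q^{7/2}$ term and fixing the constant $c_1$ requires tracking them. The degree count and normality check are routine linear algebra over $\mathbb{F}_q$ and should take only a few lines each. I would present the degree and normality verification first, then state the eigenvalue bound as the crux, proving it either by the explicit Fourier/Kloosterman computation sketched above or by reduction to the spectral facts about $\Gamma(M_2(\mathbb{F}_q), SL_2(\mathbb{F}_q))$ established in \cite{Yesim}.
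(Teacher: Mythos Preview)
Your vertex count, degree computation, and normality check are correct and in fact tidier than the paper's case-by-case verification of $|N^+|=|N^-|$.

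The eigenvalue argument, however, has a genuine gap. The edge relation $AB=C+D$ is not translation-invariant on $V_1=M_2(\mathbb{F}_q)^2$ (shifting $(A,C)\mapsto(A+S,C+T)$ does not send edges to edges), so additive characters are \emph{not} eigenfunctions of the adjacency operator $M_1$, and the ``eigenvalue-type quantity'' you write down genuinely depends on the basepoint $(A,C)$. Your double sum $\sum_{A,B}\psi(\mathrm{tr}(U^tAB))$ evaluates exactly (the inner sum over $B$ gives $q^4\cdot[U^tA=0]$, then one counts $A$ in the kernel) to either $q^4$ or $q^6$ according to $\rank U$, and there is no mechanism here producing the half-power $q^{7/2}$ you assert. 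The fallback of citing Solymosi or the Kloosterman input from \cite{Yesim} is not a proof either: $G_1$ does not factor through the special-unit Cayley graph, and its spectrum must be established independently.

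The paper proceeds combinatorially. Having established normality, it works with $M_1M_1^t$, whose $((A_1,C_1),(A_2,C_2))$ entry is the common-out-neighbour count and, via $(A_1-A_2)X=C_1-C_2$, depends only on the pair of differences. A rank-by-rank case analysis on $A_1-A_2$ and $C_1-C_2$ yields the explicit decomposition
\[
M_1M_1^t=(q^4-1)I+J-E_{11}-E_{12}+(q^2-1)E_{13}+(q^2-1)E_{14}-E_{15},
\]
where each $E_{1i}$ is the adjacency matrix of a regular auxiliary graph encoding one case. On vectors orthogonal to the all-ones vector the $J$ term vanishes, and bounding the contribution of each remaining summand by its degree gives $|\lambda_2|^2\ll q^7$, hence $|\lambda_2|\ll q^{7/2}$. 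No character sums enter. (Incidentally, because $M_1M_1^t$ \emph{is} translation-invariant, a Fourier computation applied to $M_1M_1^t$ rather than to $M_1$ would go through and in fact sharpens the bound to $|\lambda_2|\le q^3$; but that is not the argument you outlined, nor the paper's.)
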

\bigskip
Before proving this theorem, we need the following definition. 
\bigskip
\begin{definition}
Let $A$ and $B$ be two matrices in $M_2(\mathbb{F}_q)$. We say that $A$ and $B$ are \textit{equivalent} if it satisfies that for $1\le i \le 2$, the row $A_i$ of $A$ is non-zero if and only if the row $B_i$ of $B$ is  non-zero. 
\end{definition}
\begin{proof}[Proof of Theorem~\ref{thm1}]
It is obvious that the order of $G_1$ is $q^8$, because $|M_2(\mathbb{F}_q)|=q^4$ and so $|M_2(\mathbb{F}_q)\times M_2(\mathbb{F}_q)|=q^8.$
Next, one can easily prove that $G_1$ is a regular graph of in-degree  and out-degree $q^4.$ 

Let $M_1$ be the adjacency matrix of $G_1$. In the next step, we will bound the second largest eigenvalue of $G_1$. To this end, we first need to show that $G_1$ is a normal graph.
It is known that if $M_1$ is a normal matrix and $\beta$ is an eigenvalue of $M_1$, then the complex conjugate $\overline{\beta}$ is an eigenvalue of $M_1^t$. We also know that $M_1^tM_1=M_1M_1^t$ for normal matrices. Hence, we have $|\beta|^2$ is an eigenvalue of $M_1M_1^t$ and $M_1^tM_1$. In other words, in order to bound $\beta$, it is enough to bound the second largest eigenvalue of $M_1M_1^t$. 

We are now ready to show that $M_1$ is a normal. Indeed, let $(A_1, C_1)$ and $(A_2, C_2)$ be two different vertices. We now count the number of neighbors $(X, Y)$ such that there are directed edges from $(A_1, C_1)$ and $(A_2, C_2)$ to $(X, Y)$. This number is $N^+((A_1, C_1), (A_2, C_2))$.
 We first have
\begin{equation}\label{eqx}A_1X=C_1+Y, ~~A_2X=C_2+Y.\end{equation} This implies that 
\begin{equation}\label{eqy}
(A_1-A_2)X=C_1-C_2.
\end{equation}
Notice that the number of the solutions $X$ to the question \eqref{eqy} is exactly same as that of the solutions $(X,Y)$ to the system \eqref{eqx}, because if we fix a solution $X$ to $\eqref{eqy}$, then $Y$ in \eqref{eqx} is uniquely determined.
We now fall into one of the following cases:

{\bf Case 1:} If $\det(A_1-A_2)\ne 0$, then there exists a unique $X$ such that $(A_1-A_2)X=C_1-C_2$. Thus the system (\ref{eqx}) has only one solution in this case. 

{\bf Case 2:} If $\det(A_1-A_2)=0$, and $\det(C_1-C_2)\ne 0$, then the system (\ref{eqx}) has no solution. 

{\bf Case 3:} If $\det(A_1-A_2)=0$ and $\det(C_1-C_2)=0$, then we need to further consider different situations as follows:
\begin{enumerate}
\item If $\rank(A_1-A_2)=0$ and $\rank(C_1-C_2)=1$, then $A_1=A_2$. Thus it follows from \eqref{eqy} that 
\[(A_1-A_2)X=\begin{pmatrix}
		0 & 0 \\
		0 & 0 \\
		\end{pmatrix}=C_1-C_2.\] This is a contradiction since $\rank(C_1-C_2)=1$. Hence, there is no solution in this case.
\item If $\rank(A_1-A_2)=0$ and $\rank(C_1-C_2)=0$, then we have $A_1=A_2, C_1=C_2$. This contradicts with our assumption that $(A_1, C_1) \ne (A_2, C_2).$ Thus we can rule out this case. 
\item If $\rank(A_1-A_2)=1$ and $\rank(C_1-C_2)=0$, then we have $C_1=C_2$. Since $\rank(A_1-A_2)=1$, there exists at least one row of $A_1-A-2$ which is different from $(0,0)$. Hence, without loss of generality we may assume that
 \[A_1-A_2=\begin{pmatrix}
a&b\\
\alpha a&\alpha b\\
\end{pmatrix} \]
where $(a,b) \neq (0,0)$ and $\alpha\in \mathbb{F}_q$. Let $X=\begin{pmatrix}
x&y\\
z&t\\
\end{pmatrix}$. The system (\ref{eqy}) gives us 
\[ax+bz=0, ay+bt=0.\]
Since $(a, b)\ne (0,0)$, we have 
\[z=-b^{-1}ax \quad \text{or} \quad x=-a^{-1}bz.\] Hence, once we choose $x\in \mathbb F_q$, we have a unique $z$ or vice versa. A similar relation exists between $y$ and $t$ as well. That means the number of solutions to (\ref{eqx}) is $q^2,$ because  $Y$ is uniquely determined whenever we fix a solution $X$ to \eqref{eqy}. 
\item Assume that $\rank(A_1-A_2)=1$, $\rank(C_1-C_2)=1$,  $A_1-A_2$ and $C_1-C_2$ are equivalent, and $\alpha=\beta$, where 
\[A_1-A_2=\begin{pmatrix}
a&b\\
\alpha a&\alpha b\
\end{pmatrix}, C_1-C_2=\begin{pmatrix}
u&v\\
\beta u&\beta v\
\end{pmatrix}.\]

Without loss of generality we can suppose that $(a, b)\ne (0, 0)$ and $(u, v)\ne (0, 0)$. If $X$ is a solution of (\ref{eqy}), then we have 
\[ax+bz=u, ~ay+bt=v.\]
We can write $x$ in terms of $z$ or vice versa, since $(a, b)\ne (0, 0)$. Similarly we have $q$ many solutions for $(y, t)$. So the number of solutions $(X, Y)$ to the system (\ref{eqx}) is $q^2$.
\item If $\rank(A_1-A_2)=1$, $\rank(C_1-C_2)=1$, and either $A_1-A_2$ and $C_1-C_2$ are not equivalent or $\alpha\ne \beta$, where 
\[A_1-A_2=\begin{pmatrix}
a&b\\
\alpha a&\alpha b\
\end{pmatrix}, C_1-C_2=\begin{pmatrix}
u&v\\
\beta u&\beta v\
\end{pmatrix},\]
then it is not hard to see that there is no solution to (\ref{eqy}). Thus, in this case, there does not exist any solution to the system (\ref{eqx}).
\end{enumerate}

Since the same argument works for the case of $N^-((A_1, C_1), (A_2, C_2))$, we obtain the same value for $N^-((A_1, C_1), (A_2, C_2))$. In short, $M_1$ is normal. 

As we discussed above, in order to bound the second largest eigenvalue of $M_1$, it is enough to bound the second largest eigenvalue of $M_1M_1^t$. Based on previous calculations, we have
\[M_1M_1^t=(q^4-1)I+J-E_{11}-E_{12}+(q^2-1)E_{13}+(q^2-1)E_{14}-E_{15} \]
where $I$ is the identity matrix, $J$ denotes the all-one matrix, and $E_{1i}$'s, $1 \leq i \leq 5,$  are defined as follows.
$E_{11}$ is the adjacency matrix of the graph $\mathcal{G}_{11}$ defined as follows:
\[V_{11}=M_2(\mathbb{F}_q)\times M_2(\mathbb{F}_q), \]
and there is an edge between $(A_1, C_1)$ and $(A_2, C_2)$ if 
\[\det(A_1-A_2)=0, ~\det(C_1-C_2)\ne 0.\]

$E_{12}$ is the adjacency matrix of the graph $\mathcal{G}_{12}$ defined as follows:
\[V_{12}=M_2(\mathbb{F}_q)\times M_2(\mathbb{F}_q), \]
and there is an edge between $(A_1, C_1)$ and $(A_2, C_2)$ if 
\[\rank(A_1-A_2)=0, ~\rank(C_1-C_2)=1.\]

$E_{13}$ is the adjacency matrix of the graph $\mathcal{G}_{13}$ defined as follows:
\[V_{13}=M_2(\mathbb{F}_q)\times M_2(\mathbb{F}_q), \]
and there is an edge between $(A_1, C_1)$ and $(A_2, C_2)$ if 
\[\rank(A_1-A_2)=1, ~\rank(C_1-C_2)=0.\]

$E_{14}$  is the adjacency matrix of the graph $\mathcal{G}_{14}$ defined as follows:
\[V_{14}=M_2(\mathbb{F}_q)\times M_2(\mathbb{F}_q), \]
and there is an edge between $(A_1, C_1)$ and $(A_2, C_2)$ if $\rank(A_1-A_2)=1, ~\rank(C_1-C_2)=1$, $A_1-A_2$ and $C_1-C_2$ are equivalent, and $\alpha=\beta$ where 
\[A_1-A_2=\begin{pmatrix}
a&b\\
\alpha a&\alpha b\
\end{pmatrix}, C_1-C_2=\begin{pmatrix}
u&v\\
\beta u& \beta v\
\end{pmatrix}\] OR \[A_1-A_2=\begin{pmatrix}
\alpha a& \alpha b\\
 a & b\
\end{pmatrix}, C_1-C_2=\begin{pmatrix}
\beta u & \beta v\\
u & v\
\end{pmatrix}\] for some $(a,b)\ne(0,0)$ and $\alpha, \beta \in \mathbb{F}_q$.

$E_{15}$  is the adjacency matrix of the graph $\mathcal{G}_{15}$ defined as follows:
\[V_{15}=M_2(\mathbb{F}_q)\times M_2(\mathbb{F}_q), \]
and there is an edge between $(A_1, C_1)$ and $(A_2, C_2)$ if $\rank(A_1-A_2)=1, ~\rank(C_1-C_2)=1$, and either $A_1-A_2$ and $C_1-C_2$ are not equivalent, or $\alpha\ne \beta$, where 
\[A_1-A_2=\begin{pmatrix}
a&b\\
\alpha a&\alpha b\
\end{pmatrix}, C_1-C_2=\begin{pmatrix}
u&v\\
\beta u&\beta v\
\end{pmatrix}.\]

Suppose $\lambda_2$ is the second largest eigenvalue of $M_{1}$ and $\vec{v}_{2}$ is the corresponding eigenvector. Since $G_1$ is a regular graph, we have $J\cdot \vec{v}_{2}=0$. (Indeed, since $G_1$ is regular, it always has $(1,1,\cdots,1)$ as an eigenvector with eigenvalue being its regular-degree. Moreover, since the graph $G_1$ is connected, this eigenvalue has multiplicity one. Thus any other eigenvectors will be orthogonal to $(1,1,\cdots,1)$ which in turns gives us $J\cdot \vec{v}_{2}=0$). Since $M_1M_1^tv_2=|\lambda_2|^2v_2$, we get
\[|\lambda_2|^2 \vec{v}_{2}=(q^4-1)\vec{v}_{2}+\left(-E_{11}-E_{12}+(q^2-1)E_{13}+(q^2-1)E_{14}-E_{15}\right)\vec{v}_{2}.\]

Thus $\vec{v}_{2}$ is an eigenvector of 
\[(q^4-1)I+\left(-E_{11}-E_{12}+(q^2-1)E_{13}+(q^2-1)E_{14}-E_{15}\right),\]
and $|\lambda_2|^2$ is the corresponding eigenvalue. 

One can easily check that for any $1 \leq i \leq 5$, the graphs $\mathcal{G}_{1i}$ are $k_i$-regular for some $k_i$. It is not hard to check that $k_i \ll q^5$ for $2\le i\le 5$ and $k_1\ll q^7$.

Since eigenvalues of a sum of matrices are bounded by the sum of the largest eigenvalues
of the summands, we obtain
\[|\lambda_2| \ll q^{7/2} \]
which completes the proof of the theorem. 
\end{proof}
\paragraph{Proof of Theorem \ref{thm-sum2''''}:}
Since $|A||B||C|\gg q^{11}$, we have $|A|\gg q^3$. On the other hand, the number of matrices in $M_2(\mathbb{F}_q)$ with zero-determinant is $q^3+q^2-q.$ Thus, without loss of generality, we may assume that $A\subset GL_2(\mathbb{F}_q)$. 
Define
\[U:=\left\lbrace (a^{-1}, b)\colon a\in A, b\in B\right\rbrace, ~V:= A (B+C)\times C,\]
as subsets of vertices in the sum-product digraph $G_1$. It is clear that $|U|=|A||B|$ and $|V|=|C||A(B+C)|$. 

For each vertex $(a^{-1}, b)$ in $U$, it has at least $|C|$ neighbors $(a(b+c), c)\in V$. Therefore, the number of edges between $U$ and $V$ in the digraph $G_1$ is at least $|A||B||C|$. On the other hand, it follows from Theorem \ref{thm1} and Lemma \ref{edge} that 
\[e(U, V)\ll \frac{|A||B||C||A(B+C)|}{q^4}+q^{7/2}(|A||B||C|)^{1/2}\sqrt{|A(B+C)|}.\]
So, 
\[|A||B||C|\ll \frac{|A||B||C||A(B+C)|}{q^4}+q^{7/2}(|A||B||C|)^{1/2}\sqrt{|A(B+C)|}).\]
Solving this inequality, we get
\[|A(B+C)|\gg \min\left\lbrace \frac{|A||B||C|}{q^7}, q^4\right\rbrace,\]
and the theorem follows. $\square$
\paragraph{Proof of Theorem \ref{thm-sum4''''}:}
Define 
\[U=\{(b, -a )\colon a\in A, b\in B\}, ~V=C\times (A+BC),\]
as subsets of vertices in the sum-product digraph $G_1$. It is clear that $|U|= |A||B|$ and $|V|=|C||A+BC|$. 

One can check that each vertex $(b, -a)$ in $U$ has at least $|C|$ neighbors $(c, a+b\cdot c)\in V$.

This implies that $e(U, V)\ge |A||B||C|$. On the other hand, it follows from Theorem \ref{thm1} and Lemma \ref{edge} that 
\[e(U, V)\ll \frac{|A||B||C|x^2}{q^4}+q^{7/2}(|A||B||C|)^{1/2}x,\]
where $x=\sqrt{|A+BC|}$.

So \[|A||B||C|\ll \frac{|A||B||C|x^2}{q^4}+q^{7/2}\sqrt{|A||B||C|}\,x.\]
Solving this inequality, we obtain 
\[|A+BC| \gg \min\left\lbrace \frac{|A||B||C|}{q^7}, q^4\right\rbrace,\]
which completes the proof of the theorem. $\square$
\paragraph{Proof of Theorem \ref{thm-sum7}:}
Let $M$ be an arbitrary matrix in $M_2(\mathbb{F}_q)$. We will show that there exist matrices $a_1, a_2, a_3, a_4\in A$ such that 
\[a_1\cdot a_2+a_3+ a_4=M.\]
Define 
\[U:=\{(a_1, -a_3+M)\colon a_1, a_3\in A\},\]
and 
\[V:=\{(a_2, -a_4)\colon a_2, a_4\in A\},\]
as vertex subsets in the sum-product digraph $G_1$ over $M_2(\mathbb{F}_q)$.

It is clear that if there is an edge between $U$ and $V$, then 
there exist matrices $a_1, a_2, a_3, a_4\in A$ such that 
\[a_1\cdot a_2+a_3+ a_4=M.\]
It follows from Theorem \ref{thm1} and Lemma \ref{edge} that 
\[\left\vert e(U, V)-\frac{|U||V|}{q^4}\right\vert\ll q^{7/2} \sqrt{|U||V|}.\]
Since $|U|=|V|=|A|^2$, we have 
\[e(U, V)>0,\]
under the condition $|A|\gg q^{15/4}$. $\square$
\paragraph{Proof of Theorem \ref{thm-sum1}:}
Since $|A|\gg q^3$, we assume that $A\subset GL_2(\mathbb{F}_q)$.  We define:
\[U:=(A+A)\times (AA), ~V:=\left\lbrace (a, a\cdot b )\colon a, b\in A\right\rbrace,\]
as subsets of vertices in the sum-product graph $G_1$. 

It is clear that 
\[|U|=|AA||A+A|, ~|V|=|A|^2.\]
Moreover, for each vertex $(a, a\cdot b)\in V$, it has at least $|A|$ neighbors $(c+b, a\cdot c)$ in $U$. Thus the number of edges between $U$ and $V$ is at least $|A|^3$. 

On the other hand, it follows from Theorem \ref{thm1} and Lemma \ref{edge} that 
\[e(U, V)\ll \frac{|U||V|}{q^4}+q^{7/2}\sqrt{|U||V|}.\]
Hence, we obtain 
\[|A|^3\ll \frac{|A+A||A A||A|^2}{q^4}+q^{7/2}|A|\sqrt{|A+A||AA|}.\]
Set $x=\sqrt{|A+A||A A|}.$ It follows that
\[|A|x^2+q^{15/2}x-q^4|A|^2\ge 0.\]
Solving this inequality, we get
\[x\ge\frac{-q^{15/2}+\sqrt{q^{15}+4q^4|A|^3}}{2|A|},\]
which implies
\[x\gg \min \left\lbrace \frac{|A|^2}{q^{7/2}}, ~q^2|A|^{1/2}\right\rbrace.\]
On the other hand, we observe that 
\[\max\{|A+A|, |AA|\}\ge x,\]
which completes the proof of Theorem \ref{thm-sum1}. $\square$
\section{Proof of Theorem \ref{thm-sum9}}
The idea to prove Theorem \ref{thm-sum9} is the same with that of Theorem \ref{thm-sum7}, except that we will use the \textit{sum-product digraph over} $SL_2(\mathbb{F}_q)$ which is constructed as follows.
\subsection{Sum-product digraph over $SL_2(\mathbb{F}_q)$}
Let $G_2=(V_2, E_2)$ be the sum-product digraph over $SL_2(\mathbb{F}_q)$ defined as follows. The  vertex set $V_2$ is  $SL_2(\mathbb{F}_q)\times M_2(\mathbb{F}_q),$ and
 there is an edge from $(A, C)$ to $(B, D)$ if and only if $A\cdot B=C+D$. We have $|V_2|=|SL_2(\mathbb{F}_q)|\cdot |M_2(\mathbb{F}_q)|\sim q^7$.

For each vertex $(A, C)$, we now count the number of vertices $(B, D)$ such that there is an edge from $(A, C)$ to $(B, D)$, i.e., $A\cdot B=C+D$. It is clear that for each $B\in SL_2(\mathbb{F}_q)$, the matrix $D$ is uniquely determined. This means that the out-degree of each vertex in $G_2$ is $d=|SL_2(\mathbb{F}_q)|\sim q^3$. The same computation also holds for the in-degree of each vertex. In short, $G_2$ is a regular digraph of degree $d=|SL_2(\mathbb{F}_q)|\sim q^3$. In the following theorem, we give the $(n, d, \lambda)$ form of the digraph $G_2$. 
\bigskip
\begin{theorem}\label{thm2}
We have $G_2$ is an 
\[\left(|SL_2(\mathbb{F}_q)|\cdot |M_2(\mathbb{F}_q)|, |SL_2(\mathbb{F}_q)|, cq^{11/4}\right)-\mbox{digraph},\]
for some positive constant $c$.
\end{theorem}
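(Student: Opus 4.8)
The plan is to mimic the eigenvalue computation carried out for the sum--product digraph $G_1$ over $M_2(\mathbb{F}_q)$ in Theorem~\ref{thm1}, adapting every step to the smaller vertex set $SL_2(\mathbb{F}_q)\times M_2(\mathbb{F}_q)$. We have already observed above that $|V_2|\sim q^7$ and that $G_2$ is regular of in- and out-degree $d=|SL_2(\mathbb{F}_q)|\sim q^3$, so only the bound $\lambda(G_2)\ll q^{11/4}$ remains. First I would check that the adjacency matrix $M_2$ of $G_2$ is normal, exactly as before: for two distinct vertices $(A_1,C_1)$ and $(A_2,C_2)$ with $A_1,A_2\in SL_2(\mathbb{F}_q)$, a common out-neighbour $(X,Y)$ (with $X\in SL_2(\mathbb{F}_q)$, $Y\in M_2(\mathbb{F}_q)$) must satisfy $(A_1-A_2)X=C_1-C_2$, and $Y$ is then forced; the same count holds for common in-neighbours by the symmetric argument, so $|N^+(\cdot,\cdot)|=|N^-(\cdot,\cdot)|$ and $M_2$ is normal. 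Consequently it suffices to bound the second largest eigenvalue of $M_2M_2^t$, whose $((A_1,C_1),(A_2,C_2))$ entry counts solutions $X\in SL_2(\mathbb{F}_q)$ to $(A_1-A_2)X=C_1-C_2$.

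Next I would carry out the case analysis on $\rank(A_1-A_2)$ and $\rank(C_1-C_2)$, as in the proof of Theorem~\ref{thm1}, but now counting only those solutions $X$ that lie in $SL_2(\mathbb{F}_q)$. When $\det(A_1-A_2)\ne 0$ the unique solution $X$ lies in $SL_2$ precisely when $\det(C_1-C_2)=\det(A_1-A_2)$, contributing (once we have incorporated the diagonal term $A_1=A_2$, which gives $d=|SL_2(\mathbb{F}_q)|$ solutions) the dominant $J$-like and $I$-like pieces. In the rank-one cases the previous count gave $q^2$ solutions in $M_2(\mathbb{F}_q)$; intersecting with the determinant-one surface cuts this down to $\sim q$, or occasionally $0$. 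Collecting terms, I expect an identity of the shape
\[
M_2M_2^t = (d-1)I + J - \sum_i \epsilon_i\, E_i,
\]
where the $E_i$ are adjacency matrices of auxiliary $k_i$-regular graphs on $V_2$ with $k_i\ll q^{5}$ (these are the analogues of $\mathcal{G}_{11},\dots,\mathcal{G}_{15}$, restricted to having an $SL_2$ first coordinate), and the coefficients $\epsilon_i$ are $O(q)$ rather than $O(q^2)$. Using that $J$ annihilates any non-principal eigenvector (by connectedness and regularity of $G_2$, exactly as argued for $G_1$) and that the eigenvalue of a sum is at most the sum of the top eigenvalues of the summands, we would get $|\lambda_2(M_2)|^2\ll d + \sum_i|\epsilon_i|\,k_i \ll q^3 + q\cdot q^5 = O(q^6)$, hence $\lambda(G_2)\ll q^3$. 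That is weaker than the claimed $q^{11/4}$, so the bookkeeping has to be sharper.

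The main obstacle, therefore, is getting the exponent down from the naive $q^3$ to $q^{11/4}$: one cannot afford to bound each $k_i$ crudely by $q^5$ and each $\epsilon_i$ by $q$ independently. The fix I would pursue is to bound $\lambda(E_i)$ directly rather than using $\lambda(E_i)\le k_i$ — each $\mathcal{G}_{2i}$ is itself (isomorphic to a blow-up or a product of) a well-understood pseudorandom graph, for instance built from the determinant form, so that $\lambda(E_i)\ll q^{5/2}$ or so, and then $\sum_i|\epsilon_i|\lambda(E_i)\ll q\cdot q^{5/2}=q^{7/2}$, giving $\lambda(G_2)^2\ll q^{7/2}$, i.e. $\lambda(G_2)\ll q^{7/4}$ — now too strong, which signals that the true balance is somewhere in between and that some $\epsilon_i$ really are of size $q^2$ on a sub-collection of edges. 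The careful accounting of which rank-one configurations yield $q$ versus $q^2$ solutions in $SL_2(\mathbb{F}_q)$, combined with the exact regularity degrees $k_i$ of the corresponding auxiliary graphs over $SL_2(\mathbb{F}_q)\times M_2(\mathbb{F}_q)$, is exactly what pins the final exponent at $11/4$; this is the step I expect to occupy most of the work, and it is where the structure of $SL_2(\mathbb{F}_q)$ (as opposed to all of $M_2(\mathbb{F}_q)$) enters essentially.
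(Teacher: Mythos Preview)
Your high-level outline matches the paper: prove $M_2$ is normal by checking $|N^+|=|N^-|$, expand $M_2M_2^t$ via a case analysis on $\rank(A_1-A_2)$ and $\rank(C_1-C_2)$, and bound $|\lambda_2(M_2)|^2$ by bounding the spectra of the auxiliary ``error'' matrices in the resulting decomposition. Where you go off the rails is in the final step, and the uncertainty in your last two paragraphs shows it: you never identify a mechanism that actually yields second-eigenvalue bounds on the $E_i$, and your numerical guesses ($\lambda(E_i)\ll q^{5/2}$, leading to $\lambda(G_2)\ll q^{7/4}$) are wrong. The correct target is $|\lambda_2(M_2)|^2\ll q^{11/2}$.

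The missing structural idea is that each auxiliary graph on $SL_2(\mathbb{F}_q)\times M_2(\mathbb{F}_q)$ is a \emph{tensor product} of a Cayley-type graph on $SL_2(\mathbb{F}_q)$ with one on $M_2(\mathbb{F}_q)$, so its spectrum is the set of pairwise products of the two factor spectra. The $M_2(\mathbb{F}_q)$ factors are the known Cayley graphs for the conditions $\det(C-D)=i$, $\det(C-D)\ne 0$, $\det(C-D)=0$, whose second eigenvalues are $\ll q^{3/2}$ or $\ll q^2$ by the special unit Cayley graph bound. The $SL_2(\mathbb{F}_q)$ factors (edges defined by $\det(A-B)=i$, $\det(A-B)=0$, $\det(A-B)\ne 0$) require new work: the paper handles $\det(A-B)=i$ by the interlacing theorem (as a principal submatrix of the $M_2$ Cayley graph), and the $\det(A-B)=0$ graph on $SL_2$ by a separate common-neighbour computation, each giving $\lambda_2\ll q^{3/2}$. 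Multiplying through, the dominant error terms (e.g.\ $\sum_{i\ne 0}M_{2i}$, which has $q-1$ summands each of second eigenvalue $\ll q^{9/2}$, and $M_1,M_3,M_4$ with second eigenvalue $\ll q^{11/2}$) all land at $q^{11/2}$, which is exactly what gives $\lambda(G_2)\ll q^{11/4}$. Without this tensor decomposition and the component eigenvalue bounds (especially the nontrivial analysis of the $\det=0$ Cayley graph on $SL_2$), your argument has no way to reach the claimed exponent.
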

\paragraph{Proof of Theorem \ref{thm2}:}
As observed above, the degree of $G_2$ is $|SL_2(\mathbb{F}_q)|\cdot |M_2(\mathbb{F}_q)|\sim q^7$, and 
$G_2$ is a regular digraph of in-degree and out-degree $~|SL_2(\mathbb{F}_q)|.$ Hence it suffices to  show that the second largest eigenvalue of the adjacency matrix of $G_2$ is at most $cq^{11/4}$. As in the proof of Theorem \ref{thm1}, we will see that $G_2$ is normal in the next step. Thus it is enough to bound the second largest eigenvalue of the matrix $M_2M_2^t$, where $M_2$ is the adjacency matrix of $G_2$. 

Suppose $(A_1, C_1) \ne (A_2, C_2)$ are two vertices of $G_2.$ We now count the number of vertices $(X, Y)\in SL_2(\mathbb{F}_q)\times M_2(\mathbb{F}_q)$ such that there are edges from $(A_1, C_1)$ and  $(A_2, C_2)$ to $(X, Y)$. This is equivalent with the following system
\begin{equation} \label{ThangSL:eq1} A_1X=C_1+Y, ~~A_2X=C_2+Y.\end{equation}
This implies that 
\begin{equation} \label{ThangSL:eq2}(A_1-A_2)X=C_1-C_2.\end{equation}
We now consider the following cases:

{\bf Case $1$:} $\det(A_1-A_2)=\det(C_1-C_2)\ne 0$. In this case $X$ is uniquely determined, and so is $Y$. 
Thus there is exactly one solution $(X, Y)$ to the system \eqref{ThangSL:eq1}.

 {\bf Case $2$:} $\det(A_1-A_2)\ne 0$, $\det(C_1-C_2)\ne 0$, and $\det(A_1-A_2)\ne \det(C_1-C_2)$. In this case, there is no solution $(X, Y)$ with $\det(X)=1$. 
 
 {\bf Case $3$:} $\det(A_1-A_2)=0$, $\det(C_1-C_2)\ne 0$. In this case, there is no solution $(X, Y)$ with $\det(X)=1$. 
  
  {\bf Case $4$:} $\det(A_1-A_2)\ne 0$, $\det(C_1-C_2)=0$. In this case, there is no solution $(X, Y)$ with $\det(X)=1$. 
  
  {\bf Case $5$:} $\det(A_1-A_2)=\det(C_1-C_2)=0$. In this case, we further consider the following:
  
  \begin{enumerate}
  \item[5.1.] $\rank(A_1-A_2)=0$ and $\rank(C_1-C_2)=1$. In this case we have $A_1=A_2$ and $C_1\ne C_2$. Thus there is no solution $(X, Y)$ such that $(A_1-A_2)X=C_1-C_2$. 
  \item[5.2.] $\rank(A_1-A_2)=1$ and $\rank(C_1-C_2)=0$. We now count the number of $X\in SL_2(\mathbb{F}_q)$ such that $(A_1-A_2)X=\mathbf{0}$. Without loss of generality, we assume that 
  \[A_1-A_2=\begin{pmatrix}
  u&v\\
  \alpha u&\alpha v\\
  \end{pmatrix},~~X=\begin{pmatrix}
  x_1&y_1\\
  x_2&y_2\\
  \end{pmatrix}\]
  for some $\alpha\in \mathbb{F}_q$, $u\ne 0$, and $x_1y_2-x_2y_1=1$. The equation $(A_1-A_2)X=\mathbf{0}$ gives us 
  \[ux_1+vx_2=0, ~uy_1+vy_2=0.\]
Thus we obtain 
\[u(x_1, y_1)^T+v(x_2, y_2)^T=(0, 0)^T.\]
This leads to the fact that two vectors $(x_1, y_1)$ and $(x_2, y_2)$ are linearly dependent. Hence, $\det(X)=0$, which in turn shows there is no solution $(X, Y)$ in this case. 
\item[5.3.] $\rank(A_1-A_2)=1=\rank(C_1-C_2)$ and $\alpha=\beta$, where 
 \[A_1-A_2=\begin{pmatrix}
  u&v\\
  \alpha u&\alpha v\\
  \end{pmatrix} ~~C_1-C_2=\begin{pmatrix}
  a&b\\
  \beta a&\beta b\\
  \end{pmatrix}.\]
Suppose that 
  \[
  X=\begin{pmatrix}
  x_1&y_1\\
  x_2&y_2\\
  \end{pmatrix},\]
with $x_1y_2-y_1x_2=1$.
We shall show that the number of solutions $(X, Y)$ to the system \eqref{ThangSL:eq1} is exactly $q.$  
To this end, it suffices to find the number of solutions $X$ to the equation \eqref{ThangSL:eq2}. Since $\alpha=\beta$,
the system \eqref{ThangSL:eq2} is equivalent to 
\begin{equation}\label{eq1}
ux_1+vx_2=a, ~~uy_1+vy_2=b.\end{equation}

Since $\rank(C_1-C_2)=1$, we may assume that $a\ne 0$. 

If $v=0$, then we have $u\ne 0$. Therefore, we get $x_1=a/u$ and $y_1=b/u$. 

On the other hand, we also have $x_1y_2-y_1x_2=1$. Since $x_1\ne 0$, for each choice of $x_2$ in $\mathbb{F}_q$, $y_2$ is uniquely determined. In other words, if $v=0$, then the number of solutions $X$ to the equation $(A_1-A_2)X=(C_1-C_2)$ is the number of matrices of the following form
\[
\begin{pmatrix}
\frac{a}{u}&\frac{b}{u}\\
x_2&y_2\\
\end{pmatrix}.\]
Therefore, in this case, we have $q$ solutions $X$ as expected. 

If $u=0$, we can repeat the same argument as above. 

If $u\ne 0$ and $v\ne 0$, then we choose $x_1$ arbitrarily with the following two cases. 

Suppose $x_1=0.$ Since the solution $X$ belongs to $SL_2(\mathbb F_q)$, $X$ is of the following form 
\[X=\begin{pmatrix}
0&\frac{-1}{x_2}\\
x_2&y_2\\
\end{pmatrix}.\]
From this observation and the system \eqref{eq1}, we see that
$$ x_1=0,~ x_2=\frac{a}{v}, ~y_1=-\frac{v}{a} ~~\mbox{and}~~ y_2=(b-uy_1)/v.$$ 
Thus there is exactly one solution $X$ such that $x_1=0.$

Next, suppose $x_1\ne 0.$ Since $\det(X)=1$, we have 
\[y_2=\frac{1+x_2y_1}{x_1}.\]
Substituting this to the system \eqref{eq1}, we obtain 
\[y_1=\frac{bx_1-v}{a}, ~~x_2=\frac{a-ux_1}{v}.\]
This implies that for each fixed $x_1\ne0$, there is a unique solution $X$ to the equation $(A_1-A_2)X=(C_1-C_2).$
In conclusion, for each $x_1\in \mathbb{F}_q$, there is only a matrix $X\in SL_2(\mathbb{F}_q)$ such that 
\[(A_1-A_2)X=(C_1-C_2).\] 
Thus, in this case the number of solutions $(X, Y)$ to the system \eqref{ThangSL:eq1} is $q$ as desired. 

\item[5.4.]$\rank(A_1-A_2)=1=\rank(C_1-C_2)$ and $\alpha\ne \beta$, where 
 \[A_1-A_2=\begin{pmatrix}
  u&v\\
  \alpha u&\alpha v\\
  \end{pmatrix} ~~C_1-C_2=\begin{pmatrix}
  a&b\\
  \beta a&\beta b\\
  \end{pmatrix}.\]
  
It follows from the argument in the case $5.3$ that there is no solution $(X, Y)$.   
\end{enumerate}
We now can express $M_2M_2^t$ as follows.
\begin{equation}\label{eq33}M_2M_2^t=(|SL_2(\mathbb{F}_1)|-1)I+J-M_1+\sum_{i\in \mathbb{F}_q\setminus \{0\}}M_{2i}-M_3-M_4-M_5-M_6+(q-1)M_7-M_8,\end{equation}
where 

$I$ is the identity matrix, 

$J$ is the all-one matrix, 

$M_1$ is the adjacency matrix of the graph $G'_{1}=(V'_{1}, E'_{1})$ defined as follows: 
\[V'_{1}=SL_2(\mathbb{F}_q)\times M_2(\mathbb{F}_q), \]
and there is an edge between $(A, C)$ and $(B, D)$ if $\det(A-B)\ne 0$ and $\det(C-D)\ne 0$,

$M_{2i}$ is the adjacency matrix of the graph $G_{2i}=(V_{2i}, E_{2i})$ defined as follows: 
\[V_{2i}=SL_2(\mathbb{F}_q)\times M_2(\mathbb{F}_q), \]
and there is an edge between $(A, C)$ and $(B, D)$ if $\det(A-B)=\det(C-D)=i$,

$M_3$ is the adjacency matrix of the graph $G_3=(V_3, E_3)$ defined as follows: 
\[V_3=SL_2(\mathbb{F}_q)\times M_2(\mathbb{F}_q), \]
and there is an edge between $(A, C)$ and $(B, D)$ if $\det(A-B)=0, \det(C-D)\ne 0$,

$M_4$ is the adjacency matrix of the graph $G_4=(V_4, E_4)$ defined as follows: 
\[V_4=SL_2(\mathbb{F}_q)\times M_2(\mathbb{F}_q), \]
and there is an edge between $(A, C)$ and $(B, D)$ if $\det(A-B)\ne 0, \det(C-D)= 0$,

$M_5$ is the adjacency matrix of the graph $G_5=(V_5, E_5)$ defined as follows: 
\[V_5=SL_2(\mathbb{F}_q)\times M_2(\mathbb{F}_q), \]
and there is an edge between $(A, C)$ and $(B, D)$ if $\rank(A-B)=0, \rank(C-D)=1$,

$M_6$ is the adjacency matrix of the graph $G_6=(V_6, E_6)$ defined as follows: 
\[V_6=SL_2(\mathbb{F}_q)\times M_2(\mathbb{F}_q), \]
and there is an edge between $(A, C)$ and $(B, D)$ if $\rank(A-B)=1, \rank(C-D)=0$,

$M_7$ is the adjacency matrix of the graph $G_7=(V_7, E_7)$ defined as follows: 
\[V_7=SL_2(\mathbb{F}_q)\times M_2(\mathbb{F}_q), \]
and there is an edge between $(A, C)$ and $(B, D)$ if $\rank(A-B)=1=\rank(C-D)$ and $\alpha=\beta$,

$M_8$ is the adjacency matrix of the graph $G_8=(V_8, E_8)$ defined as follows: 
\[V_8=SL_2(\mathbb{F}_q)\times M_2(\mathbb{F}_q), \]
and there is an edge between $(A, C)$ and $(B, D)$ if $\rank(A-B)=1=\rank(C-D)$ and $\alpha\ne \beta$.

In order to bound the second largest eigenvalue of $M$, we will need to bound the second largest  eigenvalues of $M_1,\ldots, M_8$. 
\subsubsection{The eigenvalues of $M_1$}
Let $G_{11}=(V_{11}, E_{11})$ be a Cayley graph defined as follows:

\[V_{11}=SL_2(\mathbb{F}_q), (A, B)\in E_{11}\Leftrightarrow A-B\in GL_2(\mathbb{F}_q).\]
\begin{lemma}
The graph $G_{11}$ is a regular graph of degree $d_{11}=q^3-q^2-q\sim q^3$.
\end{lemma}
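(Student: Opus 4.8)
The plan is to deduce regularity from a vertex-transitivity argument and then read off the common degree by counting the neighbours of a single convenient vertex, namely the identity matrix. \emph{Regularity.} First I would note that for each $g\in SL_2(\mathbb{F}_q)$ the map $B\mapsto gB$ permutes the vertex set $SL_2(\mathbb{F}_q)$ and satisfies $\det(gA-gB)=\det(g)\det(A-B)=\det(A-B)$, so it preserves the adjacency relation $A-B\in GL_2(\mathbb{F}_q)$ and is therefore an automorphism of $G_{11}$. Since left translation by $SL_2(\mathbb{F}_q)$ acts transitively on the vertices (given $A,A'$, take $g=A'A^{-1}$), the graph $G_{11}$ is vertex-transitive, hence regular. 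The same computation, together with $\det(B-A)=\det(A-B)$, also confirms that $G_{11}$ is undirected. It then remains only to compute $d_{11}=\deg(I)$.

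\emph{The degree.} Here $d_{11}=|SL_2(\mathbb{F}_q)|-|\{B\in SL_2(\mathbb{F}_q):\det(I-B)=0\}|$, so the task reduces to counting the matrices $B\in SL_2(\mathbb{F}_q)$ having $1$ as an eigenvalue. The key point is that, since $\det B=1$, the second eigenvalue of such a $B$ is also forced to be $1$; hence $B$ has characteristic polynomial $(X-1)^2$ and, by Cayley--Hamilton, satisfies $(B-I)^2=0$. Conversely, $(B-I)^2=0$ makes $B-I$ nilpotent, so $\det(I-B)=0$. Thus I must count $B=I+N$ with $N\in M_2(\mathbb{F}_q)$, $N^2=0$. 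Such an $N$ has rank at most $1$, and a rank-one matrix $N=uw$ (with $u$ a nonzero column, $w$ a nonzero row) satisfies $N^2=(wu)N=\mathrm{tr}(N)\,N$, so among rank-one matrices $N^2=0$ holds exactly when $\mathrm{tr}(N)=0$. A short count of the rank-one trace-zero matrices (there are $q^2-1$ choices of $u$, then $q-1$ nonzero rows $w$ with $wu=0$, and each $N$ arises from exactly $q-1$ such pairs) gives $q^2-1$, and adding $N=0$ yields $q^2$ matrices $B$ with $\det(I-B)=0$. Using $|SL_2(\mathbb{F}_q)|=q^3-q$ we conclude $d_{11}=(q^3-q)-q^2=q^3-q^2-q\sim q^3$.

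I do not expect a genuine obstacle: the argument is elementary and uniform in the prime power $q$ (including even $q$). The one step worth stating carefully is the equivalence $\det(I-B)=0\iff(B-I)^2=0$ for $B\in SL_2(\mathbb{F}_q)$, which crucially uses $\det B=1$ and fails for general $B\in M_2(\mathbb{F}_q)$; everything else is the routine enumeration of nilpotent $2\times 2$ matrices.
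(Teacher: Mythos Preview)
Your argument is correct and takes a cleaner, genuinely different route from the paper's. The paper fixes a generic vertex $A\in SL_2(\mathbb{F}_q)$ (with top-left entry nonzero) and counts by explicit casework the matrices $X\in SL_2(\mathbb{F}_q)$ with $\rank(A-X)=1$, parametrizing the rank-one difference $A-X$ by a scalar $\lambda$ and splitting on whether various entries of $X$ vanish; the count comes out to $q^2-1$, giving $d_{11}=(q^3-q)-1-(q^2-1)$. You instead use vertex-transitivity under left multiplication to reduce to $A=I$, then observe that for $B\in SL_2(\mathbb{F}_q)$ the condition $\det(I-B)=0$ forces the characteristic polynomial to be $(X-1)^2$, so by Cayley--Hamilton the problem becomes the standard enumeration of $2\times 2$ nilpotent matrices. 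This is more conceptual, avoids all coordinate casework, and makes the appearance of $q^2$ transparent.

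One small point worth making explicit: in passing from ``$B\in SL_2(\mathbb{F}_q)$ with $\det(I-B)=0$'' to ``$B=I+N$ with $N^2=0$'' you should also check the reverse containment at the level of $SL_2$, namely that every $B=I+N$ with $N^2=0$ automatically lies in $SL_2(\mathbb{F}_q)$. This holds because $\det(I+N)=1+\mathrm{tr}(N)+\det(N)=1$ for nilpotent $N$; without it you have only shown one inclusion, not the bijection you then count.
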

\begin{proof}
For \[A=\begin{pmatrix}
a&b\\
c&d\\
\end{pmatrix}\in SL_2(\mathbb{F}_q),\]
with $d=(1+bc)/a$ (assume that $a\ne 0$), we now count the number of matrices 
\[X=\begin{pmatrix}
x&y\\
z&t\\
\end{pmatrix}\in SL_2(\mathbb{F}_q),\]
such that $\det(A-X)=0$ and $A\ne X.$ 

Since $\det(A-X)=0$ and $A\ne X$, we have $\rank(A-X)=1$. This means that there exists $\lambda\in \mathbb{F}_q$ such that 

\begin{equation}\label{eq2}c-z=\lambda(a-x), ~~\frac{1+bc}{a}-t=\lambda (b-y),\end{equation}

or 

\[a=x,~\mbox{and}~y=b.\]

We first count the matrices $X$ with $x=0$. 

Since $x=0$, we have $y$ needs to be non-zero, and $X$ is of the following form 
\[X=\begin{pmatrix}
0&y\\
-\frac{1}{y}&t\\
\end{pmatrix}.\]
Since $\det(A-X)=0$, we have 
\begin{equation}\label{eq3}(1+bc)-at=(b-y)\left(c+\frac{1}{y}\right).\end{equation}
Thus for each non-zero $y$ in $\mathbb{F}_q$, $t$ is  uniquely determined. In short, there are $q-1$ matrices with $x=0$. 

We now count matrices $X$ with $x\ne 0$.

\begin{itemize}
\item Suppose we are in the first case, i.e. there exists such a $\lambda$. If we choose $\lambda=c/a$, then we have $z=\lambda x$.  

With these parameters, the second equation of the system (\ref{eq2}) tells us that $x=a$, and $z=c$. For arbitrary $y\in \mathbb{F}_q\setminus\{b\}$, we get the desired matrices $X$. In short, the number of matrices $X\in SL_2(\mathbb{F}_q)$ such that $\det(A-X)=0$ and $\lambda=c/a$ is $q-1$. 

If $\lambda\ne c/a$, then we need to choose $x\ne a$, since otherwise $X=A$.  Moreover, for $x\in \mathbb{F}_q\setminus\{0, a\}$, $z$ and $y$ are determined, and so $X$ is determined. 

In other words, the total number of $X$ in this case is $(q-1)(q-2)+(q-1)$

\item Suppose we are in the second case, i.e. $a=x$ and $y=b$. Then, in this situation, for each $z\ne c$, $t$ is determined. This means that there are only $q-1$ matrices $X$ in this case. 
\end{itemize}
Putting these cases together, we obtain that the number of matrices $X\in SL_2(\mathbb{F}_q)$ such that $X\ne A$ and $\det(A-X)=0$ is $q^2-1$.

Hence,  $G_{11}$ is a regular graph of degree $d_{11}=q^3-q^2-q \sim q^3$. 
\end{proof}
We now observe that $G_{11}$ is a connected graph. Indeed, it has been shown in \cite[Proposition $3.9$]{Yesim} that any matrix in $M_2(\mathbb{F}_q)$ can be written as a sum of two matrices in $SL_2(\mathbb{F}_q)$. Thus, for any $A, B\in SL_2(\mathbb{F}_q)$, 
 we can assume that 
\[B-A=C_1+C_2,\]
for $C_1, C_2\in SL_2(\mathbb{F}_q)$.  

Hence, for any two vertices $A$ and $B$ in $V_{11}$, there is always a path of length two between them, namely, $A, A+C_1, B$. So the graph $G_{11}$ is connected. 

On the other hand,  one can check that $G_{11}$ is the complement of the graph $G_{31}$ (see Subsection \ref{subsection 2.2.3}) and so  its second largest eigenvalue is bounded by $q^{3/2}$ (see \cite[Lemma $8.5.1$]{alge} for more details). In short, we have the following lemma.  
\bigskip
\begin{lemma}
$G_{11}$ is a connected graph, and it is an
\[(q^3-q, q^3-q^2-q, c_{11}q^{3/2})-\mbox{graph},\]
for some constant $c_{11}$.
\end{lemma}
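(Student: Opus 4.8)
The plan is to prove the three assertions in order, with only the eigenvalue estimate requiring real work. The regularity --- that $G_{11}$ has $|SL_2(\mathbb{F}_q)|=q^3-q$ vertices and is $(q^3-q^2-q)$-regular --- is exactly the previous lemma, so nothing new is needed there. Connectedness was already established in the paragraph preceding the statement: by \cite[Proposition~$3.9$]{Yesim} every matrix in $M_2(\mathbb{F}_q)$ is a sum of two elements of $SL_2(\mathbb{F}_q)$, so for any $A,B\in SL_2(\mathbb{F}_q)$ one writes $B-A=C_1+C_2$ with $C_i\in SL_2(\mathbb{F}_q)$, and then $A,\ A+C_1,\ B$ is a path in $G_{11}$; hence $G_{11}$ is connected, of diameter at most $2$. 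So the substance of the lemma is the bound $\lambda(G_{11})\le c_{11}q^{3/2}$.

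For this I would exploit that $G_{11}$ is the complement, inside the complete graph on the vertex set $SL_2(\mathbb{F}_q)$, of the graph $G_{31}$: two distinct vertices $A,B$ are adjacent in $G_{11}$ precisely when $\det(A-B)\ne 0$ and in $G_{31}$ precisely when $\det(A-B)=0$. For any $d$-regular graph $G$ on $n$ vertices with adjacency eigenvalues $d=\mu_1\ge\mu_2\ge\cdots\ge\mu_n$, the complement $\overline{G}$ has eigenvalues $n-1-d$ together with $-1-\mu_i$ for $2\le i\le n$ (this is \cite[Lemma~$8.5.1$]{alge}; connectedness is used only to identify the Perron eigenvalue). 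Consequently $\lambda(\overline{G})\le 1+\lambda(G)$, and applying this with $G=G_{31}$ reduces the lemma to showing $\lambda(G_{31})\ll q^{3/2}$.

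To bound $\lambda(G_{31})$ --- this is also carried out in Subsection~\ref{subsection 2.2.3} --- I would reinterpret $G_{31}$ as a Cayley graph. Since $\det A=1$, the edge condition $\det(A-B)=0$ is equivalent to $\det(I-A^{-1}B)=0$, i.e.\ $A^{-1}B$ has $1$ as an eigenvalue; as $\det(A^{-1}B)=1$ this says exactly that $\mathrm{tr}(A^{-1}B)=2$ and $A^{-1}B\ne I$. Thus $G_{31}=\mathrm{Cay}\big(SL_2(\mathbb{F}_q),\,T\big)$ with $T=\{M\in SL_2(\mathbb{F}_q):\mathrm{tr}(M)=2\}\setminus\{I\}$, the set of non-identity unipotents; $T$ is symmetric and conjugation-invariant, hence a union of one or two conjugacy classes (according to the parity of $q$), each of size $\sim q^2$. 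For a conjugation-invariant connection set, Schur's lemma shows the adjacency operator acts on the $\rho$-isotypic block of $L^2\big(SL_2(\mathbb{F}_q)\big)$ as the scalar $\frac{1}{\dim\rho}\sum_{t\in T}\chi_\rho(t)$ for each irreducible $\rho$, so these scalars are the eigenvalues of $G_{31}$. The trivial representation contributes the degree $|T|=q^2-1$; for every nontrivial $\rho$ one bounds $\big|\frac{1}{\dim\rho}\sum_{t\in T}\chi_\rho(t)\big|\ll q^{3/2}$ from the character table of $SL_2(\mathbb{F}_q)$: class sizes are $O(q^2)$, the values of $\chi_\rho$ on unipotent classes are $O(\sqrt q)$ (for the cuspidal representations of dimension $(q\mp1)/2$ they are essentially Gauss sums), and $\dim\rho\gg q$ for all nontrivial $\rho$. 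Equivalently, one may expand these sums directly into Kloosterman/Gauss sums in the spirit of \cite{Yesim}. Combining, $\lambda(G_{31})\ll q^{3/2}$, and hence $\lambda(G_{11})\le 1+\lambda(G_{31})\le c_{11}q^{3/2}$.

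The main obstacle is the uniform spectral bound for $G_{31}$: one must control $\frac{1}{\dim\rho}\sum_{t\in T}\chi_\rho(t)$ simultaneously over all nontrivial irreducibles, and the binding case is the small cuspidal representations of dimension $(q\mp1)/2$, whose characters on unipotent elements have absolute value of order $\sqrt q$, making the ratio genuinely of size $q^{2}\cdot\sqrt q/q=q^{3/2}$ and no smaller. The even/odd $q$ dichotomy (one unipotent class versus two) is only a bookkeeping nuisance, and everything else --- regularity, connectedness, and the complement--spectrum identity --- is either already in hand or purely formal.
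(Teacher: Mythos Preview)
Your proposal is correct. The overall architecture---regularity from the preceding lemma, connectedness from the ``sum of two $SL_2$ matrices'' fact, and reduction of $\lambda(G_{11})$ to $\lambda(G_{31})$ via the complement identity \cite[Lemma~8.5.1]{alge}---matches the paper exactly. The genuine divergence is in how you bound $\lambda(G_{31})$. The paper (in Subsection~\ref{subsection 2.2.3}) proceeds combinatorially: it counts common neighbors of pairs $A_1,A_2\in SL_2(\mathbb{F}_q)$ in $G_{31}$ through a lengthy case analysis, then writes $M_{31}^2$ as $(q^2-q+1)I+(q-1)J$ plus a combination of adjacency matrices $E_{3i}$ of auxiliary regular graphs, each of degree $O(q^2)$, and reads off $\lambda(G_{31})\ll q^{3/2}$ from the trivial spectral bound on the $E_{3i}$. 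Your route---recognizing $G_{31}$ as $\mathrm{Cay}(SL_2(\mathbb{F}_q),T)$ with $T$ the non-identity trace-$2$ elements, a symmetric conjugation-invariant set, and then invoking the character table to evaluate the eigenvalues $\frac{1}{\dim\rho}\sum_{t\in T}\chi_\rho(t)$---is cleaner and more structural, and it correctly identifies the $(q\mp1)/2$-dimensional representations (odd $q$) as the tight case giving the genuine $q^{3/2}$ order. The cost is that one must import the character table of $SL_2(\mathbb{F}_q)$ (or the equivalent Gauss/Kloosterman estimates), whereas the paper's argument is self-contained but considerably longer.
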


Let $G_{12}=(V_{12}, E_{12})$ be the graph defined as follows:

\[V_{12}=M_2(\mathbb{F}_q), (A, B)\in E_{12} \Leftrightarrow A-B\in GL_2(\mathbb{F}_q).\]

It has been proved in \cite{Yesim} that $G_{12}$ is a connected graph, and it is an 
\[(q^4,~ |GL_2(\mathbb{F}_q)|\sim q^4,~ q^2)-\mbox{graph}.\]

For two graphs $G_1=(V_1, E_1)$ and $G_2=(V_2, E_2)$, the tensor product $G_1\otimes G_2$ is a graph with vertex set $V(G_1\otimes G_2)=V_1\times V_2$, and there is an edge between $(u,v)$ and $(u', v')$ if and only if $(u, u')\in E_1$ and $(v, v')\in E_2$. Suppose that the adjacency matrices of $G_1$ and $G_2$ are $A$ and $B$, respectively. Then the adjacency matrix of $G_1\otimes G_2$ is the tensor product of $A$ and $B$. It is well-known that if $\gamma_1, \ldots, \gamma_n$ are eigenvalues of $A$ and $\gamma_1', \ldots, \gamma_m'$ are eigenvalues of $B$, then the eigenvalues of $A\otimes B$ are $\gamma_i\gamma_j'$ with $1\le i \le n$, $1\le j\le m$ (see \cite{maris} for more details).

Observe that if  $G'_1$ denotes the tensor product of $G_{11}$ and $G_{12},$ then we have the following lemma. 
\begin{lemma}\label{lmx1}
 $G'_{1}$ is a connected graph, and it is an 
\[(q^7-q^5, \sim q^7, c_{11} q^{11/2})-\mbox{graph}.\]
\end{lemma}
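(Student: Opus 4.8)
The plan is to read off Lemma~\ref{lmx1} from the two preceding $(n,d,\lambda)$ descriptions of $G_{11}$ and $G_{12}$, together with the standard behaviour of the order, the degree, the spectrum, and the connectivity of a graph under the tensor product; recall in particular, as noted just before the statement, that the eigenvalues of $G_{11}\otimes G_{12}$ are exactly the pairwise products of the eigenvalues of $G_{11}$ and of $G_{12}$. The order and the regularity are then immediate: $|V(G'_1)|=|V(G_{11})|\cdot|V(G_{12})|=(q^3-q)\,q^4=q^7-q^5$, and every vertex $(x,y)$ of $G'_1$ has degree $d_{11}\cdot|GL_2(\mathbb{F}_q)|$, where $d_{11}=q^3-q^2-q$ and $|GL_2(\mathbb{F}_q)|=(q^2-1)(q^2-q)\sim q^4$, so the common degree is $\sim q^7$.

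For the spectral gap, write the eigenvalues of $G_{11}$ as $\gamma_1=d_{11}\ge\gamma_2\ge\cdots$ and those of $G_{12}$ as $\gamma'_1=|GL_2(\mathbb{F}_q)|\ge\gamma'_2\ge\cdots$. The two earlier statements give $\lambda(G_{11})\le c_{11}q^{3/2}<d_{11}$ and $\lambda(G_{12})\le q^2<|GL_2(\mathbb{F}_q)|$, so $|\gamma_i|=d_{11}$ forces $i=1$ and $|\gamma'_j|=|GL_2(\mathbb{F}_q)|$ forces $j=1$. Consequently, among the eigenvalues $\gamma_i\gamma'_j$ of $G'_1$, the only one of modulus $d_{11}|GL_2(\mathbb{F}_q)|$ is the (positive) Perron eigenvalue $\gamma_1\gamma'_1\sim q^7$; any other product satisfies $|\gamma_1\gamma'_j|\le d_{11}q^2\sim q^5$ when $j\ge2$, or $|\gamma_i\gamma'_1|\le c_{11}q^{3/2}|GL_2(\mathbb{F}_q)|\le c_{11}q^{11/2}$ when $i\ge2$, or $|\gamma_i\gamma'_j|\le c_{11}q^{3/2}\cdot q^2\ll q^{11/2}$ when $i,j\ge2$. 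Taking the maximum yields $\lambda(G'_1)\le c_{11}q^{11/2}$, where the bound $|GL_2(\mathbb{F}_q)|<q^4$ is exactly what lets us keep the same constant $c_{11}$. Connectivity of $G'_1$ then follows from Weichsel's theorem, since $G_{11}$ and $G_{12}$ are both connected and non-bipartite (a $d$-regular graph whose second largest eigenvalue is strictly below $d$ cannot be bipartite); equivalently, it follows from the fact just established that $G'_1$ has a unique eigenvalue of maximum modulus.

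I do not expect a genuine obstacle: the lemma is essentially a bookkeeping exercise on top of the two earlier results. The two points that need a little attention are checking that the dominant off-Perron term is really $\lambda(G_{11})\cdot|GL_2(\mathbb{F}_q)|\sim q^{11/2}$ rather than $d_{11}\cdot\lambda(G_{12})\sim q^5$, and confirming that the Perron eigenvalue of $G'_1$ is isolated so that it does not itself count toward $\lambda(G'_1)$; both are handled by the strict inequalities $\lambda(G_{11})<d_{11}$ and $\lambda(G_{12})<|GL_2(\mathbb{F}_q)|$ coming from the earlier lemmas.
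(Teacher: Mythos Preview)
Your proposal is correct and follows exactly the approach the paper intends: the paper simply observes that $G'_1$ is the tensor product $G_{11}\otimes G_{12}$ and states the lemma without further proof, leaving the order, degree, eigenvalue, and connectivity computations implicit in the preceding discussion of tensor products. Your write-up supplies precisely those routine details (including the careful check that the dominant non-Perron eigenvalue product is $\lambda(G_{11})\cdot|GL_2(\mathbb{F}_q)|$ and the non-bipartiteness argument for connectivity), so there is nothing to add or correct.
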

\subsubsection{The eigenvalues of $M_{2i}$}\label{Subsection2.2.2}
We can view $G_2$ as the union of $(q-1)$ graphs $G_{2i}$ with $i\in \mathbb{F}_q\setminus\{0\}$, where $G_{2i}=(V_{2i}, E_{2i})$ is defined as follows:
\[V_{2i}=SL_2(\mathbb{F}_q)\times M_2(\mathbb{F}_q),\]
and there is an edge between  $(A_1, C_1)$ and $(A_2, C_2)$ if $\det(A_1-A_2)=i$ and $\det(C_1-C_2)=i$. Without loss of generality, we may assume that $i=1$. 

Let $G_{211}$ be the \textit{special unit Cayley graph} $\Gamma(M_2(\mathbb{F}_q), SL_2(\mathbb{F}_q))$ defined as follows:
\[V_{211}=M_2(\mathbb{F}_q),\]
and there is an edge between $A$ and $B$ if $A-B\in SL_2(\mathbb{F}_q)$. As in the Section $2$, we have the following lemma on the $(n, d, \lambda)$ form of the graph $\Gamma(M_2(\mathbb{F}_q), SL_2(\mathbb{F}_q))$. 
\bigskip
\begin{lemma}\label{useful} The special unit Cayley graph $G_{211}$ is a connected graph, and it is an 
\[(q^4, \sim q^3, 2q^{3/2})-\mbox{graph}.\]
\end{lemma}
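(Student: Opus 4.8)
The statement to establish is that the special unit Cayley graph $G_{211} = \Gamma(M_2(\mathbb{F}_q), SL_2(\mathbb{F}_q))$ is connected and is a $(q^4, \sim q^3, 2q^{3/2})$-graph. This is exactly the graph introduced in Section~2, whose spectral parameters were recorded in~\eqref{x-0}, so the plan is essentially to re-derive (or simply cite) the facts already used there. The vertex set has $|M_2(\mathbb{F}_q)| = q^4$ vertices; each vertex $A$ is joined to $A + S$ for $S \in SL_2(\mathbb{F}_q)$, and since $\det(A-B) = \det(B-A)$ the graph is undirected and $|SL_2(\mathbb{F}_q)| = q^3 - q \sim q^3$-regular. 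The only substantive point is the bound $\lambda \le 2q^{3/2}$ on the second-largest eigenvalue of the adjacency matrix.

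\textbf{Eigenvalue computation.} Because $G_{211}$ is a Cayley graph on the abelian group $(M_2(\mathbb{F}_q), +) \cong \mathbb{F}_q^4$ with connection set $SL_2(\mathbb{F}_q)$, its eigenvalues are indexed by additive characters of $\mathbb{F}_q^4$. Identifying $M_2(\mathbb{F}_q)$ with its own dual via the bilinear pairing $\langle X, Y\rangle = \mathrm{tr}(X^tY)$ and fixing a nontrivial additive character $\psi$ of $\mathbb{F}_q$, the eigenvalue attached to $Y \in M_2(\mathbb{F}_q)$ is
\[
\lambda_Y = \sum_{S \in SL_2(\mathbb{F}_q)} \psi\bigl(\langle S, Y\rangle\bigr).
\]
For $Y = 0$ this is $|SL_2(\mathbb{F}_q)| \sim q^3$, the regular degree. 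For $Y \ne 0$ one writes the sum over $SL_2(\mathbb{F}_q)$ by summing $\psi(\langle S, Y\rangle)$ over all $S$ with $\det S = 1$; expanding the indicator of $\det S = 1$ via characters of $\mathbb{F}_q$ reduces the estimate to Kloosterman-type sums over $\mathbb{F}_q$, for which the Weil bound gives $O(q^{1/2})$ per sum and hence $|\lambda_Y| \le 2q^{3/2}$ after accounting for the $q$ summands each of size $O(q)$. This is precisely the computation carried out by the first listed author in~\cite{Yesim}, so in the write-up I would simply invoke that reference rather than reproduce the Kloosterman estimate. Connectedness then follows because $\lambda_0 = |SL_2(\mathbb{F}_q)| > \lambda$, so the degree is a simple eigenvalue; alternatively, and more concretely, one notes (as remarked in~\cite{Yesim}) that every matrix in $M_2(\mathbb{F}_q)$ is a sum of two elements of $SL_2(\mathbb{F}_q)$, so any two vertices are joined by a path of length at most two and the graph in fact has diameter $2$.

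\textbf{Main obstacle.} There is no real obstacle here: the lemma is a verbatim restatement of~\eqref{x-0} from Section~2, and its proof is entirely contained in~\cite{Yesim}. The only thing to be careful about is bookkeeping in the character-sum manipulation — tracking the exact number of Kloosterman sums that appear and verifying the constant $2$ in front of $q^{3/2}$ — but since the statement only claims the bound $2q^{3/2}$ (and the applications only need $\lambda \ll q^{3/2}$), even a crude version of the Weil bound suffices. I would therefore present this as a short citation-backed lemma and move directly to its use in bounding the eigenvalues of $M_{2i}$ via the tensor-product structure, exactly as was done for $M_1$ through $G'_1 = G_{11} \otimes G_{12}$.
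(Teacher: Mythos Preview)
Your proposal is correct and matches the paper's treatment exactly: the paper does not prove Lemma~\ref{useful} afresh but simply points back to Section~2 (equation~\eqref{x-0}) and cites \cite{Yesim} for the Kloosterman-sum computation, connectedness, and the diameter-two remark. Your sketch of the character-sum argument is an accurate summary of what that reference contains, and your plan to cite rather than reproduce is precisely what the authors do.
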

It is interesting to note that the graph $G_{211}$ has diameter two. We refer readers to \cite{Yesim} for a detailed proof. 

Let $G_{212}$ be a graph defined as follows:
\[V_{212}=SL_2(\mathbb{F}_q),\]
and there is an edge between $A$ and $B$ if $A-B\in SL_2(\mathbb{F}_q)$. Using elementary calculations, we can prove that $G_{212}$ is a regular graph of degree $d_{212}\sim q^2$, and it is a connected graph.

To bound the second largest eigenvalue of this graph, we need the interlacing eigenvalue theorem. 
\bigskip
\begin{theorem}[\textbf{Interlacing eigenvalue}, \cite{alge}]\label{thmz}
Let $A$ be an $n\times n$ matrix with eigenvalues $\lambda_1\le\cdots\le \lambda_n$. Let $B$ be an $m\times m$ symmetric minor of $A$ with eigenvalues $\mu_1\le \cdots\le \mu_{m}$. Then 
\[\lambda_i\le \mu_i\le \lambda_{i+n-m}.\]
\end{theorem}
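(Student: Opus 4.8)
The plan is to deduce the two interlacing inequalities from the Courant--Fischer min--max characterization of the eigenvalues of a symmetric matrix. Throughout I use that $A$ is symmetric, so that the $\lambda_i$ are genuine ordered reals, and that an ``$m\times m$ symmetric minor'' means a principal submatrix: there is an index set $J\subseteq\{1,\dots,n\}$ with $|J|=m$ such that $B$ is obtained from $A$ by keeping exactly the rows and columns indexed by $J$. Identifying $\mathbb{R}^m$ with the coordinate subspace $W:=\{x\in\mathbb{R}^n:\ x_j=0\text{ for all }j\notin J\}$, the quadratic form $y\mapsto y^tBy$ on $\mathbb{R}^m$ is precisely the restriction of $x\mapsto x^tAx$ to $W$, and $B$ has eigenvalues $\mu_1\le\cdots\le\mu_m$. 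This identification of $B$ with a restricted quadratic form is the one fact that makes the variational approach go through.

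Next I would recall Courant--Fischer in the form
\[
\lambda_i=\min_{\substack{S\subseteq\mathbb{R}^n\\ \dim S=i}}\ \max_{\substack{x\in S\\ x\neq 0}}\frac{x^tAx}{x^tx},
\qquad
\mu_i=\min_{\substack{S\subseteq W\\ \dim S=i}}\ \max_{\substack{x\in S\\ x\neq 0}}\frac{x^tAx}{x^tx},
\]
the second identity being Courant--Fischer applied to $B$ inside $W\cong\mathbb{R}^m$. Write $R(x)=x^tAx/x^tx$ for the Rayleigh quotient. For the lower bound $\lambda_i\le\mu_i$: every $i$-dimensional subspace $S\subseteq W$ is in particular an $i$-dimensional subspace of $\mathbb{R}^n$, so the family of subspaces over which the minimum defining $\mu_i$ is taken is a sub-family of the one defining $\lambda_i$. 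Minimizing the same quantity $\max_{0\neq x\in S}R(x)$ over a smaller family can only increase its value, hence $\mu_i\ge\lambda_i$.

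For the upper bound $\mu_i\le\lambda_{i+n-m}$: fix any subspace $T\subseteq\mathbb{R}^n$ with $\dim T=i+n-m$. By the dimension formula,
\[
\dim(T\cap W)\ \ge\ \dim T+\dim W-n\ =\ (i+n-m)+m-n\ =\ i,
\]
so I may choose $S\subseteq T\cap W$ with $\dim S=i$. Since $S\subseteq W$, the min--max formula for $\mu_i$ gives $\mu_i\le\max_{0\neq x\in S}R(x)$, and since $S\subseteq T$ we have $\max_{0\neq x\in S}R(x)\le\max_{0\neq x\in T}R(x)$. As $T$ ranges over all $(i+n-m)$-dimensional subspaces of $\mathbb{R}^n$, taking the minimum yields $\mu_i\le\lambda_{i+n-m}$.

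I do not expect a genuine obstacle here: the only points needing care are making explicit that a symmetric minor is a principal submatrix (so that its quadratic form is literally a restriction of that of $A$) and getting the dimension bookkeeping right in the intersection step, in particular that $i+n-m$ is exactly the shift forcing $\dim(T\cap W)\ge i$. As an alternative one could prove the single-step case $m=n-1$ directly — using the dual max--min form of Courant--Fischer together with the fact that $W$ is the orthogonal complement of one coordinate vector — and then iterate $n-m$ times; the min--max argument above is cleaner and avoids the induction.
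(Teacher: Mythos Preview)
Your proof is correct. Note, however, that the paper does not actually prove this statement: Theorem~\ref{thmz} is quoted from the reference \cite{alge} (Godsil--Royle, \emph{Algebraic Graph Theory}) and used as a black box, so there is no ``paper's own proof'' to compare against. Your argument via the Courant--Fischer min--max principle is the standard textbook proof of Cauchy interlacing and is essentially what one finds in \cite{alge}; your care in interpreting ``symmetric minor'' as a principal submatrix (so that the quadratic form of $B$ is the restriction of that of $A$) and in tracking the dimension count $\dim(T\cap W)\ge i$ are exactly the points that make the argument go through.
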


Let $M_{21j}$ be the adjacency matrix of $G_{21j}$ for $j=1,2.$ It is clear that $M_{212}$ is a symmetric minor of $M_{211}$. Suppose $\mu_1\le \cdots\le \mu_m =d_{212}\sim q^2$ are eigenvalues of $M_{212}$ with $m=q^{3}-q$. The second largest eigenvalue of $M_{212}$ is bounded by $\max\{|\mu_1|, |\mu_{m-1}|.\}$ 

Let $\lambda_1, \ldots, \lambda_n$ denote eigenvalues of $M_{211}$ where $\lambda_1\le \cdots \le \lambda_n\sim q^3$ with $n=q^4.$ Theorem \ref{thmz} tells us that 
\[\lambda_1\le \mu_1, ~ \mu_{m-1}\le \lambda_{n-1}.\]

Thus we see that $\max\{|\mu_1|, |\mu_{m-1}|\}$ is bounded by the second largest eigenvalue of $M_{211}$ which is at most $2q^{3/2}$. In conclusion, it follows that $G_{212}$ is an 
\[(q^3-q, \sim q^2, 2q^{3/2})-\mbox{graph}.\]

It follows from the definition of $G_{21}$ that $G_{21}$ is the tensor product of $G_{211}$ and $G_{212}$.  Hence we have the following lemma on the $(n, d, \lambda)$ form of $G_{21}$.
\bigskip
\begin{lemma}\label{lmx2}
The connected graph $G_{21}$ is an
\[(q^7-q^5, \sim q^5, c_{21}q^{9/2})-\mbox{graph},\]
for some positive constant $c_{21}$.
\end{lemma}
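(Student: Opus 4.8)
The plan is to exploit the tensor product structure $G_{21}=G_{211}\otimes G_{212}$ recorded just above, together with the $(n,d,\lambda)$ data already established for the two factors: $G_{211}$ is a $(q^4,\sim q^3,2q^{3/2})$-graph (Lemma~\ref{useful}) and $G_{212}$ is a $(q^3-q,\sim q^2,2q^{3/2})$-graph. First I would note that the order and regularity of $G_{21}$ are immediate from the tensor product description: the vertex set is $SL_2(\mathbb{F}_q)\times M_2(\mathbb{F}_q)$, so $|V(G_{21})|=(q^3-q)q^4=q^7-q^5$, and since an edge of $G_{21}$ is exactly a pair of an edge in $G_{212}$ and an edge in $G_{211}$, the common in/out-degree of $G_{21}$ is the product of the two degrees, hence $\sim q^2\cdot q^3=q^5$.

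Next I would deal with connectivity. Here I would invoke the classical fact that the tensor product of two graphs is connected if and only if both factors are connected and at least one of them is non-bipartite. Both $G_{211}$ and $G_{212}$ are connected by the cited results, and both are non-bipartite: if either were bipartite, its adjacency matrix would have $-d$ as an eigenvalue, contradicting the bound $\lambda\le 2q^{3/2}=o(d)$ for that factor. Consequently $G_{21}$ is connected, and its Perron eigenvalue $\sim q^5$ has multiplicity one.

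The heart of the matter is the bound on the second eigenvalue, which runs exactly parallel to the proof of Lemma~\ref{lmx1}. Write $d_1\sim q^3$ and $d_2\sim q^2$ for the degrees of $G_{211}$ and $G_{212}$. By the eigenvalue formula for tensor products, every eigenvalue of $G_{21}$ has the form $\gamma\gamma'$ with $\gamma$ an eigenvalue of $G_{211}$ and $\gamma'$ an eigenvalue of $G_{212}$. Apart from the single pair $(d_1,d_2)$ giving the Perron eigenvalue, every product falls into one of three regimes, each of which I would bound directly: $|d_1\gamma'|\le d_1\cdot 2q^{3/2}\ll q^{9/2}$ when $\gamma'\ne d_2$; $|\gamma\, d_2|\le 2q^{3/2}\cdot d_2\ll q^{7/2}$ when $\gamma\ne d_1$; and $|\gamma\gamma'|\le 4q^3$ when $\gamma\ne d_1$ and $\gamma'\ne d_2$. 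The dominant contribution is the first, so $\lambda(G_{21})\ll q^{9/2}$, and $G_{21}$ is a $(q^7-q^5,\sim q^5,c_{21}q^{9/2})$-graph for a suitable $c_{21}$.

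I expect the only real point requiring care to be the bookkeeping in the last step: one must be sure that no product of a principal eigenvalue of one factor with a $-d$ eigenvalue of the other can occur (this is precisely what non-bipartiteness of both factors rules out), and that the largest non-principal product is genuinely $d_1\cdot 2q^{3/2}$ rather than something arising from the ``other $\times$ other'' regime. Everything else is a transcription of the argument already used for Lemma~\ref{lmx1}.
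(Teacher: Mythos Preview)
Your proposal is correct and follows essentially the same route as the paper: the paper establishes the $(n,d,\lambda)$ data for the two factors $G_{211}$ and $G_{212}$ (the latter via eigenvalue interlacing from $G_{211}$), records that $G_{21}$ is their tensor product, and then invokes the tensor-product eigenvalue formula, leaving the lemma as an immediate consequence. Your write-up is in fact more careful than the paper's, since you spell out the non-bipartiteness needed for connectivity of the tensor product and the case analysis for the dominant eigenvalue product, both of which the paper leaves implicit.
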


\subsubsection{The eigenvalues of $M_3$} \label{subsection 2.2.3}
Let $G_{31}=(V_{31}, E_{31})$ be a graph defined as follows:
\[V_{31}=SL_2(\mathbb{F}_q), ~(A, B)\in E_{31}\Leftrightarrow \det(A-B)=0.\]
It has been shown above that $G_{31}$ is a regular graph of degree $q^2-1$. 
\bigskip
\begin{lemma}
The graph $G_{31}$ is a connected graph, and it is an
\[(q^3-q, q^2-1, c_{31}q^{3/2})-\mbox{graph},\]
for some positive constant $c_{31}$.
\end{lemma}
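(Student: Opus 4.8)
The plan is to recognize $G_{31}$ as a Cayley graph on the group $SL_2(\mathbb{F}_q)$ whose connection set is a union of conjugacy classes, and then to read its spectrum off the character table of $SL_2(\mathbb{F}_q)$. First I would rewrite the edge relation using the $2\times 2$ identity $\det(A+B)=\det A+\det B+\mathrm{tr}(A\,\mathrm{adj}(B))$ together with $\mathrm{adj}(B)=B^{-1}$ when $\det B=1$: for $A,B\in SL_2(\mathbb{F}_q)$ this gives $\det(A-B)=2-\mathrm{tr}(AB^{-1})$. Since the trace is invariant under inversion on $SL_2(\mathbb{F}_q)$, the condition $\det(A-B)=0$ is equivalent to $A^{-1}B\in S$, where $S:=\{g\in SL_2(\mathbb{F}_q):\mathrm{tr}(g)=2,\ g\ne I\}$, and one checks $S=S^{-1}$ and $I\notin S$. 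Hence $G_{31}=\mathrm{Cay}(SL_2(\mathbb{F}_q),S)$; this already gives the vertex count $|SL_2(\mathbb{F}_q)|=q^3-q$, and $|S|=q^2-1$ matches the degree computed above. Moreover $S$ is exactly the set of non-identity unipotent elements (those with characteristic polynomial $(t-1)^2$), which forms a single conjugacy class when $q$ is even and two classes of size $(q^2-1)/2$ when $q$ is odd; in particular $S$ is conjugation-invariant.

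For connectedness I would simply invoke that $\mathrm{Cay}(G,S)$ is connected if and only if $S$ generates $G$: since $S$ contains the transvections $\begin{pmatrix}1&1\\0&1\end{pmatrix}$ and $\begin{pmatrix}1&0\\1&1\end{pmatrix}$, which generate $SL_2(\mathbb{F}_q)$, the graph $G_{31}$ is connected.

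The substantive step is the bound on the second largest eigenvalue. Since $S$ is conjugation-invariant, Schur's lemma shows that for each irreducible representation $\rho$ of $SL_2(\mathbb{F}_q)$ the operator $\sum_{s\in S}\rho(s)$ equals the scalar $\omega_\rho=\frac{1}{\dim\rho}\sum_{s\in S}\chi_\rho(s)$, and the eigenvalues of the adjacency matrix of $G_{31}$ are precisely the numbers $\omega_\rho$, with $\omega_\rho$ occurring with multiplicity $(\dim\rho)^2$. For the trivial representation $\omega_\rho=|S|=q^2-1$, the degree. For every nontrivial $\rho$ one has $\dim\rho\ge(q-1)/2$, while the character table of $SL_2(\mathbb{F}_q)$ gives $|\chi_\rho(s)|\le\sqrt q$ for every non-identity unipotent $s$ (the value is $0$ for the Steinberg representation, $\pm1$ for the principal- and discrete-series representations, and of modulus at most $\tfrac{1+\sqrt q}{2}$ for the exceptional representations of dimension $(q\pm1)/2$). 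Therefore
\[
|\omega_\rho|\ \le\ \frac{|S|\,\max_{s\in S}|\chi_\rho(s)|}{\dim\rho}\ \le\ \frac{(q^2-1)\sqrt q}{(q-1)/2}\ =\ 2(q+1)\sqrt q\ \le\ c_{31}q^{3/2},
\]
which is the asserted estimate $\lambda(G_{31})\le c_{31}q^{3/2}$.

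The hard part is precisely this last character estimate. The crude quasirandomness bound $|\omega_\rho|\le\sqrt{|S|\,|SL_2(\mathbb{F}_q)|/\dim\rho}$ only produces $O(q^2)$, and interlacing $G_{31}$ against the analogous singular-difference graph on all of $M_2(\mathbb{F}_q)$ is no better, so one genuinely needs the smallness of the characters of $SL_2(\mathbb{F}_q)$ at unipotent elements; alternatively one could obtain the same bound by repeating the Kloosterman-sum computation of the first listed author \cite{Yesim} for $\Gamma(M_2(\mathbb{F}_q),SL_2(\mathbb{F}_q))$. I also note that tracking the extra cancellation $\chi_\rho(u_1)+\chi_\rho(u_2)=1$ between the two unipotent classes would even give $\lambda(G_{31})=O(q)$, but the weaker $O(q^{3/2})$ is all that is used here and keeps the bound uniform in $q$.
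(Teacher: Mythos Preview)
Your argument is correct and takes a genuinely different, more structural route than the paper. The paper proves the eigenvalue bound by a direct combinatorial calculation: it counts the common neighbours of an arbitrary pair $A_1\ne A_2$ in $G_{31}$ through a lengthy case analysis on $(\lambda_1,\lambda_2)$, expresses $M_{31}^2$ as $(q^2-q+1)I+(q-1)J$ plus a signed combination of adjacency matrices of five auxiliary graphs $E_{31},\dots,E_{35}$, verifies that each auxiliary graph is regular of degree $\ll q^2$, and concludes $\lambda_2^2\ll q^3$. You instead rewrite the edge relation via $\det(A-B)=2-\mathrm{tr}(A^{-1}B)$, recognise $G_{31}$ as the Cayley graph on $SL_2(\mathbb{F}_q)$ with connection set the non-identity unipotents, and read the eigenvalues off the character table. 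Your approach is shorter and explains \emph{why} the bound holds (bounded character values at unipotents divided by the large minimal nontrivial degree), and your remark that the cancellation $\chi_\rho(u_1)+\chi_\rho(u_2)=\pm 1$ actually yields $\lambda(G_{31})=O(q)$ is a genuine sharpening; the paper's argument, on the other hand, is entirely self-contained and needs no representation-theoretic input. One small correction to your connectedness step: for $q=p^n$ with $n>1$ the two particular matrices $\left(\begin{smallmatrix}1&1\\0&1\end{smallmatrix}\right)$ and $\left(\begin{smallmatrix}1&0\\1&1\end{smallmatrix}\right)$ only generate $SL_2(\mathbb{F}_p)$; you should instead note that $S$ contains \emph{all} transvections $\left(\begin{smallmatrix}1&\alpha\\0&1\end{smallmatrix}\right)$ and $\left(\begin{smallmatrix}1&0\\\alpha&1\end{smallmatrix}\right)$ with $\alpha\in\mathbb{F}_q^*$, and these do generate $SL_2(\mathbb{F}_q)$.
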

\begin{proof}
To prove this lemma, we need to count the number of common neighbors of two vertices. 
Let 
\[A_1=\begin{pmatrix}
a&b\\
c&d\\
\end{pmatrix} \ne A_2=\begin{pmatrix}
a'&b'\\
c'&d'\\
\end{pmatrix}.\]
We now count the number of matrices $X\in SL_2(\mathbb{F}_q)$ of the following form

\[
X=\begin{pmatrix}
x&y\\
z&t\\
\end{pmatrix}\]
such that $\rank(A_1-X)=\rank(A_2-X)=1$. 

This implies that there exist $\lambda_1, \lambda_2\in \mathbb{F}_q$ such that 
\begin{equation}\label{eq41}c-z=\lambda_1(a-x), ~d-t=\lambda_1(b-y), ~~c'-z=\lambda_2(a'-x), ~d'-t=\lambda_2(b'-y).\end{equation}
Hence, we obtain 
\begin{equation}\label{eq42}c-c'=\lambda_1a-\lambda_2a'+x(\lambda_2-\lambda_1), ~d-d'=\lambda_1b-\lambda_2b'+y(\lambda_2-\lambda_1).\end{equation}
Since $xt-yz=1$, from the equations (\ref{eq41}), we have 
\begin{equation}\label{eq43}
\lambda_1(ay-bx)=1-xd+yc, ~\lambda_2(a'y-b'x)=1-xd'+yc'.
\end{equation}
To count the number of matrices $X$, our main strategy is to count the number pairs $(\lambda_1, \lambda_2)$ satisfying (\ref{eq42}) and (\ref{eq43}), and then for each pair of $\lambda_1, \lambda_2$, we count the number of tuples $(x, y, z, t)$ satisfying (\ref{eq41})-(\ref{eq43}). We now fall into two cases: 

{\bf Case $1$:} In this case, we will find conditions on $A_1$ and $A_2$ such that there are pairs $(\lambda_1, \lambda_2)$ with $\lambda_1=\lambda_2$. 

\begin{itemize}
\item[I.] If we have $a\ne a'$, $b\ne b'$, and $\frac{c-c'}{a-a'}=\frac{d-d'}{b-b'}$, then we have 
\[\lambda_1=\lambda_2=\frac{c-c'}{a-a'}=\frac{d-d'}{b-b'}.\]
Moreover we also have
\[\lambda_1=\lambda_2=\frac{1+b'c-a'd}{ab'-ba'}.\]
Indeed, from the equation 
\[\frac{c-c'}{a-a'}=\frac{1+b'c-a'd}{ab'-ba'},\]
we obtain 
\[a'(2+bc'+b'c-a'd)=a(b'c'+1).\]
On the other hand, it follows from the condition $\frac{c-c'}{a-a'}=\frac{d-d'}{b-b'}$ that 
\[bc'+b'c-a'd=cb-ad+c'b'-a'd'+ad'=-2+ad'.\]
Substituting this to the above equation gives us 
\[a(b'c'+1)=a'(ad'),\]
which is always true since $b'c'-a'd'=-1$. 

It is clear that in this case we have $\det(A_1-A_2)=0$.

\item[II.] If we have $a\ne a'$, $b=b',$ then $d=d'$, and
\[\lambda=\frac{c-c'}{a-a'}=\frac{1+b'c-a'd}{ab'-ba'}, ~\det(A_1-A_2)=0.\]
\item[III.] If $a=a', b\ne b'$, then $c=c'$, and we have 
\[\lambda=\frac{d-d'}{b-b'}=\frac{1+b'c-a'd}{ab'-ba'}, ~\det(A_1-A_2)=0.\]
\item[IV.] If $a=a'$, $b=b'$, and $c\ne c'$ or $d\ne d'$, then we have 
\[\det(A_1-A_2)=0, \]
but there is no pair $(\lambda_1, \lambda_2)$ with $\lambda_1=\lambda_2$. 

For other cases, there is no pair $(\lambda_1, \lambda_2)$ with $\lambda_1=\lambda_2$.
\end{itemize}
For each pair $(\lambda_1, \lambda_2)$ with $\lambda_1=\lambda_2$, by using some elementary calculations, one can show that there are only $q$ common neighbors of $A_1$ and $A_2$. Note that if $\det(A_1-A_2)\ne 0$, then there is no pair $(\lambda_1, \lambda_2)$ with $\lambda_1=\lambda_2$.

{\bf Case $2$:} In this case, we will find conditions on $A_1$ and $A_2$ such that there are pairs $(\lambda_1, \lambda_2)$ with $\lambda_1\ne \lambda_2$. 

It follows from (\ref{eq42}) that 
\[x=\frac{c-c'+\lambda_2a'-\lambda_1 a}{\lambda_2-\lambda_1}, ~y=\frac{d-d'+\lambda_2b'-\lambda_1b}{\lambda_2-\lambda_1}.\]
Thus the equation (\ref{eq43}) gives us 
\begin{equation}\label{eq44}\lambda_2(1+b'c-a'd-\lambda_1(ab'-ba'))=(bc'+1-ad')\lambda_1+d'c-c'd.\end{equation}
We now consider the following cases:
\begin{itemize}
\item[I.] If we have $ab'-ba'\ne 0$ and $(a-a')(d-d')=(b-b')(c-c')$, then we see that

if $\lambda_1=\frac{1+b'c-a'd}{ab'-ba'}$, then any $\lambda_2$ will satisfy (\ref{eq44}). Thus the number of pairs $(\lambda_1, \lambda_2)$ with $\lambda_1\ne \lambda_2$ is $q-1$. 

For $\lambda_1\ne\frac{1+b'c-a'd}{ab'-ba'}$,  $\lambda_2$ is determined by (\ref{eq44}). 

Therefore, in total, the number of pairs $(\lambda_1, \lambda_2)$ with $\lambda_1\ne \lambda_2$ is $2q-2$. 
\item[II.]
If $ab'-ba'=0$, then we have 
\[\lambda_2(1+b'c-a'd)+\lambda_1(ad'-bc'-1)=cd'-dc'.\]

Since $A_1\ne A_2$, we have at least one of the terms $(1+b'c-a'd), (ad'-bc'-1), cd'-dc'$ is non-zero (To see this: if all these three terms are zero and $ab'-ba'=0$, we then have $A_1[b,-a]^T=[0,-1]^T$ and $A_1[b',-a']^T=[0,-1]^T$. Taking inverse matrix of $A_1$ shows $b=b', a=a'$).

If either $1+b'c-a'd\ne 0$ or $ad'-bc'-1\ne 0$, then we can choose $\lambda_2$ (or $\lambda_1$) arbitrarily, and so $\lambda_1$ (or $\lambda_2$) is determined. In this case we have the number of pairs $(\lambda_1, \lambda_2)$ with $\lambda_1\ne \lambda_2$ is $q$. 

If $1+b'c-a'd= 0$, $ad'-bc'-1= 0$, and $cd'-dc'\ne 0$, then there is no pair $(\lambda_1, \lambda_2)$ with $\lambda_1\ne \lambda_2$. 
\item[III.] If we have $ab'-ba'\ne 0$ and $(a-a')(d-d')\ne (c-c')(b-b')$, then 
\[\lambda_1=\frac{1+b'c-a'd}{ab'-ba'}\]
is not a solution of (\ref{eq44}). 

Thus for each $\lambda_1\ne \frac{1+b'c-a'd}{ab'-ba'}$, $\lambda_2$ is  uniquely determined. 

Since $\det(A_1-A_2)\ne 0$, we have $\lambda_2\ne \lambda_1$. 

In this case, the number of pairs $(\lambda_1, \lambda_2)$ with $\lambda_1\ne \lambda_2$ is $q-1$. 

We note that for each pair $(\lambda_1, \lambda_2)$ with $\lambda_1\ne \lambda_2$, then the matrix $X$ is  uniquely determined. 
\end{itemize}

In summary, the number of common neighbors of two vertices $A_1\ne A_2$ can be stated as follows:

\begin{itemize}
\item If $\det(A_1-A_2)=0$, then we have 
\begin{enumerate}
\item If $a\ne a'$, $b\ne b'$, $ab'-ba'\ne 0$, then there is only one pair $(\lambda_1, \lambda_2)$ with $\lambda_1=\lambda_2=(c-c')/(a-a')$, and there are $(q-1)$ pairs $(\lambda_1, \lambda_2)$ with $\lambda_1\ne \lambda_2$. So the number of $X$ is $2q-1$.
\item If $a\ne a'$, $b=b'$, $d=d'$, $ab'-ba'\ne 0$, then 
there is only one pair $(\lambda_1, \lambda_2)$ with $\lambda_1=\lambda_2=(c-c')/(a-a')$, and there are $(q-1)$ pairs $(\lambda_1, \lambda_2)$ with $\lambda_1\ne \lambda_2$. So the number of $X$ is $2q-1$. 
\item If $a=a'$, $b\ne b'$, $c=c'$, then there is only one pair $(\lambda_1, \lambda_2)$ with $\lambda_1=\lambda_2=(d-d')/(b-b')$, and there are $(q-1)$ pairs $(\lambda_1, \lambda_2)$ with $\lambda_1\ne \lambda_2$. So the number of $X$ is $2q-1$.
\item If $a=a'$, $b=b'$, $c\ne c'$ or $d\ne d'$, then we have $cd'-dc'\ne 0$ since $A_1\ne A_2$. This implies that there is no solution $(\lambda_1, \lambda_2)$. So the number of $X$ is $0$. 
\end{enumerate}
\item If $\det(A_1-A_2)\ne 0$, then we have 
\begin{enumerate}
\item If $ab'-ba'\ne 0$, there is no pair $(\lambda_1, \lambda_2)$ with $\lambda_1=\lambda_2$, and there are $(q-1)$ pairs $(\lambda_1, \lambda_2)$ with $\lambda_1\ne \lambda_2$. So the number of $X$ is $q-1$.
\item If $ab'-ba'=0$, then we have either $1+b'c-a'd\ne 0$ or $a'd-bc'-1\ne 0$ since $\det(A_1-A_2)\ne 0$. There is no pair $(\lambda_1, \lambda_2)$ with $\lambda_1=\lambda_2$, and there are $q$ pairs $(\lambda_1, \lambda_2)$ with $\lambda_1\ne \lambda_2$. So the number of $X$ is $q$.
\end{enumerate}
\end{itemize}

These cases tell us that the graph $G_{31}$ is a connected graph. 

Let $M_{31}$ be the adjacency matrix of $G_{31}$. Then $M_{31}$ can be presented as follows:

\[M_{31}^2=(q^2-q+1)I+(q-1)J+(q)E_{31}+(q)E_{32}+qE_{33}-(q-1)E_{34}+E_{35},\]
where 

$I$ is the identity matrix, 

$J$ is the all-one matrix, 

$E_{31}$ is the adjacency matrix of the graph $G_{31}=(V_{31}, E_{31})$ defined as follows: 
\[V_{31}=SL_2(\mathbb{F}_q),\]
and there is an edge between $A_1$ and $A_2$ if $a\ne a'$, $b\ne b'$, $ab'-ba'\ne 0$, and $\det(A_1-A_2)=0$. 

$E_{32}$ is the adjacency matrix of the graph $G_{32}=(V_{32}, E_{32})$ defined as follows: 
\[V_{32}=SL_2(\mathbb{F}_q),\]
and there is an edge between $A_1$ and $A_2$ if $a\ne a'$, $b= b'$, $d=d'$, $ab'-ba'\ne 0$, and $\det(A_1-A_2)=0$. 

$E_{33}$ is the adjacency matrix of the graph $G_{33}=(V_{33}, E_{33})$ defined as follows: 
\[V_{33}=SL_2(\mathbb{F}_q),\]
and there is an edge between $A_1$ and $A_2$ if $a= a'$, $b\ne b'$, $c=c'$, and $\det(A_1-A_2)=0$.

$E_{34}$ is the adjacency matrix of the graph $G_{34}=(V_{34}, E_{34})$ defined as follows: 
\[V_{34}=SL_2(\mathbb{F}_q),\]
and there is an edge between $A_1$ and $A_2$ if $a= a'$, $b= b'$, either $c\ne c'$ or $d\ne d'$, and $\det(A_1-A_2)=0$. 

$E_{35}$ is the adjacency matrix of the graph $G_{35}=(V_{35}, E_{35})$ defined as follows: 
\[V_{35}=SL_2(\mathbb{F}_q),\]
and there is an edge between $A_1$ and $A_2$ if $ab'-ba'=0$ and either $1+b'c-a'd\ne 0$ or $a'd-bc'-1\ne 0$, and $\det(A_1-A_2)\ne 0$.

Since $G_{31}$ is a regular graph of order $q^2-1$, its largest eigenvalue is $q^2-1$. Suppose $\lambda_2$ is the second largest eigenvalue of $M_{31}$ and $v_{\lambda_2}$ is the corresponding eigenvector. Then we have 
\[\lambda_2^2 v_{\lambda_2}=(q^2-q+1)v_{\lambda_2}+\left(qE_{31}+qE_{32}+qE_{33}-(q-1)E_{34}+E_{35}\right)v_{\lambda_2}.\]

Thus $v_{\lambda_2}$ is an eigenvector of 
\[(q^2-q+1)I+\left(qE_{31}+qE_{32}+qE_{33}-(q-1)E_{34}+E_{35}\right),\]
and $\lambda_2^2$ is the corresponding eigenvalue. 

Since the graphs $E_{3i}$ are regular graphs of degree at most $q^2$, and eigenvalues of a sum of matrices are bounded by the sum of the largest eigenvalues
of the summands, we get 
\[\lambda_2\ll q^{3/2},\]
which concludes the proof of the theorem.
\end{proof}












Since the graph $G_3$ is the tensor product of $G_{12}$ and $G_{31}$,  we have the following lemma. 
\bigskip
\begin{lemma}\label{lmx3} The graph $G_3$ is a connected graph, and it is an
\[(q^7, \sim q^6, c_3q^{11/2})-\mbox{graph},\]
for some positive constant $c_3$.
\end{lemma}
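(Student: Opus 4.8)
The plan is to read off all three parameters of $G_3$ from the tensor‑product description indicated in the statement, exactly as was done for $G'_1$ and $G_{21}$ in the previous subsections. First I would check that $G_3$ is, up to relabelling coordinates, the tensor product $G_{12}\otimes G_{31}$: two vertices $(A_1,C_1)$, $(A_2,C_2)$ of $G_3$ are joined precisely when $\det(A_1-A_2)=0$ and $\det(C_1-C_2)\ne 0$, that is, when $A_1$ is adjacent to $A_2$ in $G_{31}$ and $C_1$ is adjacent to $C_2$ in $G_{12}$, which is the defining adjacency of the tensor product; the relation is symmetric since $\det(-M)=\det(M)$ for $2\times 2$ matrices, so $G_3$ is undirected. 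It then follows immediately that $|V_3|=|SL_2(\mathbb{F}_q)|\cdot|M_2(\mathbb{F}_q)|=(q^3-q)q^4\sim q^7$, and — since the adjacency matrix of the tensor product is the Kronecker product of the adjacency matrices of the factors — that $G_3$ is regular of degree $(q^2-1)\cdot|GL_2(\mathbb{F}_q)|\sim q^2\cdot q^4=q^6$.

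For the spectral bound I would use the fact recalled above (see \cite{maris}) that the eigenvalues of $G_{12}\otimes G_{31}$ are precisely the products $\mu_i\lambda_j$, where the $\mu_i$ are the eigenvalues of $G_{12}$ and the $\lambda_j$ those of $G_{31}$, and then feed in the spectral data already available: $G_{31}$ is an $(q^3-q,\,q^2-1,\,c_{31}q^{3/2})$-graph (established above in this section) and $G_{12}$ is an $(q^4,\,|GL_2(\mathbb{F}_q)|\sim q^4,\,q^2)$-graph (from \cite{Yesim}). Hence the leading eigenvalue of $G_3$ is $(q^2-1)|GL_2(\mathbb{F}_q)|\sim q^6$, equal to the degree, while every other product is bounded in absolute value by
\[
\max\left\{\,q^2\cdot(q^2-1),\ |GL_2(\mathbb{F}_q)|\cdot c_{31}q^{3/2},\ q^2\cdot c_{31}q^{3/2}\,\right\}\ll q^{11/2},
\]
so that $\lambda(G_3)\le c_3 q^{11/2}$ for an appropriate constant $c_3$ (a fixed multiple of $c_{31}$).

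For connectedness I would invoke that a tensor product of two connected graphs is connected once both factors are non‑bipartite. Both $G_{31}$ and $G_{12}$ are connected (the former by the case analysis above, the latter by \cite{Yesim}), and each is non‑bipartite, since each contains a triangle — for instance $\left\{\,I,\ \begin{pmatrix}1&1\\0&1\end{pmatrix},\ \begin{pmatrix}1&2\\0&1\end{pmatrix}\,\right\}$ in $G_{31}$ when $q\geq 3$, and $\left\{\,\mathbf{0},\ I,\ \begin{pmatrix}1&1\\1&0\end{pmatrix}\,\right\}$ in $G_{12}$. Non‑bipartiteness of the factors is in fact essential here, since a bipartite $G_3$ would have $-\deg(G_3)\sim -q^6$ as an eigenvalue, contradicting the bound $\lambda(G_3)\ll q^{11/2}$.

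The only step that needs attention is the spectral one: one must check that, among the products $\mu_i\lambda_j$ other than the leading one, the dominant term is $\deg(G_{12})\cdot\lambda(G_{31})\sim q^4\cdot c_{31}q^{3/2}=c_{31}q^{11/2}$ rather than $\lambda(G_{12})\cdot\deg(G_{31})\sim q^2\cdot q^2=q^4$, and that the leading product $\deg(G_{12})\deg(G_{31})$ occurs with multiplicity one (this is exactly where connectedness and non‑bipartiteness of the two factors are used). Everything else mirrors the arguments already given for Lemmas~\ref{lmx1} and \ref{lmx2}.
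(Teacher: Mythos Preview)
Your proposal is correct and follows exactly the paper's approach: the paper's entire proof is the single sentence ``Since the graph $G_3$ is the tensor product of $G_{12}$ and $G_{31}$, we have the following lemma,'' and you have simply filled in the details of that tensor-product computation. Your treatment is in fact more complete than the paper's, since you explicitly verify the non-bipartiteness of both factors (needed to ensure the product $d_{12}\cdot d_{31}$ has multiplicity one and hence that $\lambda(G_3)\ll q^{11/2}$), a point the paper passes over in silence here just as it does for Lemmas~\ref{lmx1} and~\ref{lmx2}.
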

\subsubsection{The eigenvalues of $M_4$}
Let $G_{41}=(V_{41}, E_{41})$ be the graph defined as follows:
\[V_{41}=M_2(\mathbb{F}_q), ~(A, B)\in E_{41}\Leftrightarrow \det(A-B)=0.\]
It is easy to see that $G_{41}$ is a regular graph with the degree $\sim q^3$. It is clear that this graph is the complement of the graph $G_{12}$, and so the second largest eigenvalue is bounded by $q^2$. One can use a similar argument as in the previous subsection to show that  $G_{41}$ is connected.

In other words, the graph $G_{41}$ is a connected graph, and it is an
\[(q^4, \sim q^3, c_{41}q^{2})-\mbox{graph},\]
for some positive constant $c_{41}$.

One can check that $G_4$ is the tensor product graph of $G_{11}$ and $G_{41}$. Therefore we have the following result on the $(n, d, \lambda)$ form of $G_{4}$.
\bigskip
\begin{lemma}\label{lmx4} The graph $G_4$ is a connected graph, and it is an
\[(q^7, q^6, c_4q^{11/2})-\mbox{graph},\]
for some positive constant $c_4$.
\end{lemma}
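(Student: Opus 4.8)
The plan is to identify $G_4$ with the tensor product $G_{11}\otimes G_{41}$ of the two graphs whose $(n,d,\lambda)$-data was recorded in the two preceding subsections, and then to read off the three parameters from the eigenvalue-multiplication rule for Kronecker products. First I would check the identification: the tensor product $G_{11}\otimes G_{41}$ has vertex set $SL_2(\mathbb{F}_q)\times M_2(\mathbb{F}_q)$ and an edge between $(A,C)$ and $(B,D)$ precisely when $(A,B)$ is an edge of $G_{11}$ and $(C,D)$ is an edge of $G_{41}$, i.e.\ precisely when $\det(A-B)\ne 0$ and $\det(C-D)=0$; this is exactly the edge relation defining $G_4$. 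Consequently the adjacency matrix of $G_4$ is the Kronecker product $M_{11}\otimes M_{41}$, and its eigenvalues are the products $\mu\nu$ as $\mu$ ranges over the eigenvalues of $M_{11}$ and $\nu$ over those of $M_{41}$. From $|V(G_{11})|=|SL_2(\mathbb{F}_q)|=q^3-q$, $|V(G_{41})|=|M_2(\mathbb{F}_q)|=q^4$, and the degrees $d_{11}=q^3-q^2-q\sim q^3$ and $d_{41}\sim q^3$ already established, $G_4$ has $(q^3-q)q^4\sim q^7$ vertices and is $d_{11}d_{41}\sim q^6$-regular, which gives the first two parameters.

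For the spectral gap I would argue as follows. Both $G_{11}$ and $G_{41}$ are connected, and --- being $(n,d,\lambda)$-graphs with $\lambda=o(d)$ (here $c_{11}q^{3/2}$ against $d_{11}\sim q^3$, and $c_{41}q^2$ against $d_{41}\sim q^3$) --- neither is bipartite, since a connected bipartite $d$-regular graph has $-d$ among its eigenvalues, which would force $\lambda\ge d$. Hence for each factor the Perron eigenvalue equals its degree, is simple, and strictly dominates in absolute value every other eigenvalue, the latter being at most $c_{11}q^{3/2}$ (resp.\ $c_{41}q^2$). It follows that the value $d_{11}d_{41}$ occurs among the products $\mu\nu$ only as $d_{11}\cdot d_{41}$ and is simple, while any other eigenvalue $\mu\nu$ of $M_{11}\otimes M_{41}$ satisfies $|\mu|\le c_{11}q^{3/2}$ or $|\nu|\le c_{41}q^2$ (while always $|\mu|\le d_{11}\ll q^3$ and $|\nu|\le d_{41}\ll q^3$). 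A short case split then yields
\[
|\mu\nu|\ \ll\ \max\left\{\, c_{11}q^{3/2}\cdot q^{3},\ \ q^{3}\cdot c_{41}q^{2},\ \ c_{11}c_{41}q^{7/2}\,\right\}\ \ll\ q^{5}\ \le\ q^{11/2},
\]
so the second largest eigenvalue of $G_4$ is at most $c_4q^{11/2}$ for a suitable constant $c_4$. (In fact one gets $\lambda(G_4)\ll q^{5}$; the weaker bound $q^{11/2}$ is all that is needed later and is uniform with the bound for $G_3$.)

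For connectedness I would invoke the classical fact that the tensor product of two connected graphs is connected if and only if at least one of the factors is non-bipartite (this is the only place where non-bipartiteness is strictly needed, although it was also used above to keep the Perron eigenvalue of the product simple and isolated). Since $G_{11}$ and $G_{41}$ are both connected and, as noted, non-bipartite, $G_4$ is connected. Combining the three parts gives that $G_4$ is a connected $(q^7,q^6,c_4q^{11/2})$-graph, as claimed.

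The proof has essentially no hard step: the only genuine verifications are that the edge relation of $G_4$ coincides with that of $G_{11}\otimes G_{41}$ (immediate from the two definitions) and that $G_{11}$ and $G_{41}$ are non-bipartite (immediate from their spectral gaps, since a connected $d$-regular bipartite graph would have $-d$ as an eigenvalue). Everything else is routine bookkeeping with the Kronecker-product spectrum and with the parameters of $G_{11}$ and $G_{41}$ collected earlier; the mildest care is only needed in the final case analysis, where one checks that the dominant off-Perron product is $d_{11}\cdot(c_{41}q^2)\ll q^{5}$ rather than something larger.
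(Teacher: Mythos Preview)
Your proposal is correct and follows exactly the paper's approach: the paper simply states that $G_4$ is the tensor product of $G_{11}$ and $G_{41}$ and reads off the $(n,d,\lambda)$ parameters from the Kronecker-product eigenvalue rule, without spelling out the case analysis or the connectedness argument you supply. Your additional observations (non-bipartiteness from the spectral gap, and that in fact $\lambda(G_4)\ll q^5$ rather than merely $q^{11/2}$) are correct refinements the paper omits.
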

\subsubsection{The eigenvalues of  of $M_5$, $M_6$, $M_7$, and $M_8$}\label{lmx5}
It follows from the definitions of $M_5$ and $M_6$ that their eigenvalues are bounded by $q^3$. This is enough for our purpose.

For the graph $G_7$, it is clear that for each $A\in SL_2(\mathbb{F}_q)$, the number of $C\in SL_2(\mathbb{F}_q)$ such that $\rank(A-C)=1$ is $q^2-1$, and for each matrix $C$, $\alpha$ is determined. 

For each $\alpha$, there are $q^2$ matrices $D\in M_2(\mathbb{F}_q)$ of the following form 
\[D=\begin{pmatrix}
u&v\\
\alpha u&\alpha v\\
\end{pmatrix}\]
such that 
$\rank(C-D)=1$. 

In conclusion, the degree of each vertex in $G_7$ is bounded by $q^4$. This is enough for our purpose. 

Similarly, one can prove that $G_8$ is a regular graph of degree $q^5$. This satisfies our purpose.

We are now ready to prove Theorem \ref{thm2}.
\begin{proof}[Proof of Theorem \ref{thm2}]
Suppose $M_2$ is the adjacency matrix of $G_2$. We have proved that 
\begin{equation}\label{eqx99}
M_2M_2^t=(|SL_2(\mathbb{F}_q)|-1)I+J-M_1+\sum_{i\in \mathbb{F}_q\setminus \{0\}}M_{2i}-M_3-M_4-M_5-M_6+(q-1)M_7-M_8.\end{equation}

Note that all the $M_j$ and $M_{2i}$ that appear in (\ref{eqx99}) are adjacency matrices of regular graphs. Thus all of them share an eigenvector $v_1=(1, \ldots, 1)^T$, and the corresponding eigenvalues are their degrees. 

Suppose that $\lambda_2$ is the second largest eigenvalue of $M$ and $v_2$ is the corresponding eigenvector. Then we have $J\cdot v_2=\mathbf{0}$. 

The equation (\ref{eqx99}) gives us 
\[(\lambda_2^2-q^3+q+1)v_2=(-M_1+\sum_{i\in \mathbb{F}_q\setminus \{0\}}M_{2i}-M_3-M_4-M_5-M_6+(q-1)M_7-M_8)\cdot v_2.\]

Therefore, $v_2$ is an eigenvector of the sum 
\begin{equation}\label{eqqqx}-M_1+\sum_{i\in \mathbb{F}_q\setminus \{0\}}M_{2i}-M_3-M_4-M_5-M_6+(q-1)M_7-M_8,\end{equation}
and $\lambda_2^2-q^3+q+1$ is its second largest eigenvalue. 

Let $w_2, \ldots, w_{q^{7}}$ be orthogonal vectors in $\mathbb{F}_q^{q^7}$ such that $\{w_1:=v_1, w_2, \ldots, w_{q^7}\}$ form a normal basis. Let $P$ be the matrix such that the $i$-th column is $w_i$. 

Then we have 
\[P^{-1}M_jP=\begin{bmatrix}
    \mbox{degree ($G_j$)} & * & * & \dots  & *& * \\
    0 & * & * & \dots  & * & *\\
    0 & * & * & *  & *& * \\
    \vdots & \vdots & \vdots & \vdots & \vdots & \vdots \\
    0 & * & * & \dots  & *& *\\
 0 & * & * & \dots  & *& *\\
\end{bmatrix},\]
and 
\[P^{-1}M_{2i}P=\begin{bmatrix}
    \mbox{degree ($G_{2i}$)} & * & * & \dots  & *& * \\
    0 & * & * & \dots  & * & *\\
    0 & * & * & *  & *& * \\
    \vdots & \vdots & \vdots & \vdots & \vdots & \vdots \\
    0 & * & * & \dots  & *& *\\
 0 & * & * & \dots  & *& *\\
\end{bmatrix},\]
for $1\le j\le 8$, $1\le i \le q$.

We have 
\[P^{-1}(-M_1+\sum_{i\in \mathbb{F}_q\setminus \{0\}}M_{2i}-M_3-M_4-M_5-M_6+(q-1)M_7-M_8)P=\]
\[\begin{bmatrix}
    \sum_{i=1}^q\mbox{degree ($G_{2i}$)}+\sum_{j=1, j \ne 2}^{8} \mbox{degree ($G_j$)} & * & * & \dots  & *& * \\
    0 & * & * & \dots  & * & *\\
    0 & * & * & *  & *& * \\
    \vdots & \vdots & \vdots & \vdots & \vdots & \vdots \\
    0 & * & * & \dots  & *& *\\
 0 & * & * & \dots  & *& *\\
\end{bmatrix},\]
which implies that the largest eigenvalue of the sum (\ref{eqqqx}) is 
\[\sum_{i=1}^q\mbox{degree ($G_{2i}$)}+\sum_{j=1, j \ne 2}^{8} \mbox{degree ($G_j$)}.\]
Notice that the sets of eigenvalues of two matrices $M$ and $P^{-1}MP$ are the same.

Furthermore,  since the graphs $G_1, G_{2j}, G_3, G_4$ are connected graphs,  their largest eigenvalues have multiplicity one.  Therefore, the second largest eigenvalues of the sum (\ref{eqqqx}) is bounded by 
\[\sum_{i=1}^q\lambda_2(G_{2i})+\sum_{j=1, 3, 4}\lambda_2(G_j)+\lambda_1(G_5)+\lambda_1(G_6)+\lambda_1(G_7)+\lambda_1(G_8),\]
where $\lambda_1(G_j)$ is the largest eigenvalue of $G_j$, and $\lambda_2(G_j)$ is the second largest eigenvalue of $G_j$. 

Applying Lemmas \ref{lmx1}-\ref{lmx4}, and results of Subsection \ref{lmx5}, we obtain 
\[\lambda_2\ll q^{11/4},\]
which completes the proof of the theorem.
\end{proof}
\paragraph{Proof of Theorem \ref{thm-sum9}:} The proof of Theorem \ref{thm-sum9} is identical with that of Theorem \ref{thm-sum7} except that we use Theorem \ref{thm2} instead of Theorem \ref{thm1}. Thus we leave remaining details to the reader. $\square$
\section{Discussions}
One might ask if it is possible to prove Corollary \ref{nice9} (Corollary \ref{nice19}) using the approach in the proof of Theorem \ref{thm-sum4''''} (Theorem \ref{thm-sum2''''}). To this end,  we need to  define the following version of the sum-product digraph over $SL_2(\mathbb{F}_q)$. More precisely, let $G=(A\cup B, E)$ be a bipartite graph with $A=SL_2(\mathbb{F}_q)\times SL_2(\mathbb{F}_q)$ and $B=SL_2(\mathbb{F}_q)\times M_2(\mathbb{F}_q)$, there is an edge between $(X, Y)\in A$ and $(Z, T)\in B$ if $XZ=Y+T$. By a direct computation, we have $|A|\sim q^6$ and $|B|\sim q^7$. It is not hard to check that each vertex in $A$ is of degree $\sim q^3$, and each vertex in $B$ is of degree $\sim q^2$. This implies that $\lambda_1=-\lambda_n=q^{5/2}$ with $n=|A|+|B|$.

The main difficult problem is to bound the third largest eigenvalue of $G$. The problem becomes much harder than the proof of Theorem \ref{thm1}, for instance, in the graph $G_2$, we have relaxed conditions of vertices in $A$, namely, $Y\in M_2(\mathbb{F}_q)$ instead of $SL_2(\mathbb{F}_q)$, but the proof of Theorem \ref{thm2} on the $(n, d, \lambda)$ form of $G_2$ is complicated and technical. Giving a good upper bound of the third eigenvalue of $G$ is outside the realm of methods in this paper. 

For $A\subset SL_2(\mathbb{F}_q)$, it follows from Corollary \ref{Adidaphat-09} and Corollary \ref{Adidaphat-10} that the two polynomials $f=x+yz$ and $f=x(y+z)$ satisfy $|f(A, A, A)|\gg q^4$. It would be interesting to study polynomials of other forms, for example, $f=(x-y)^2+z$. 


\section*{Acknowledgments}
D. Koh was supported by Korea National Science Foundation grant NRF-2018R1D1A1B07044469. T. Pham was supported by Swiss National Science Foundation grant P2ELP2175050. C-Y. Shen was supported in part by MOST, through grant 104-2628-M-002-015 -MY4. The authors are grateful to the referee for useful comments and suggestions.

\end{document}